\newcommand{\Z}{\mathbb{Z}}
\newtheorem{theorem}{Theorem}[section]
\newtheorem{corollary}[theorem]{Corollary}
\newtheorem{lemma}[theorem]{Lemma}
\newtheorem{proposition}[theorem]{Proposition}
\newtheorem{definition}[theorem]{Definition}
\newtheorem{example}[theorem]{Example}
\newtheorem{remark}[theorem]{Remark}
\newcommand{\K}{\mathbb{K}}
\newcommand{\as}{\mathfrak{as}}
\begin{document}

\title[Hom-quasi-bialgebras]{Hom-quasi-bialgebras  }

\author[M. Elhamdadi]{Mohamed Elhamdadi}
\address{University of South Florida, USA}
\curraddr{}
\email{emohamed@math.usf.edu}
\thanks{}

\author[A. Makhlouf]{Abdenacer Makhlouf}
\address{Universit\'e de Haute Alsace, France}
\curraddr{}
\email{Abdenacer.Makhlouf@uha.fr}
\thanks{The second author would like to thank the Math department of University of South Florida for their hospitality during his visit.}

\subjclass[2000]{Primary 16W30}

\date{}

\begin{abstract}
The purpose of this paper is to give a general  survey of Hom-bialgebras, which are  bialgebra-type structures where the identities are twisted by a morphism, and to extend the concept of quasi-bialgebra to Hom-setting. We provide some key constructions and generalize the concept of gauge transformation and Drinfeld's twists to  this type of generalized quasi-bialgebras. Moreover, we provide an example of twisted quantum double.  
\end{abstract}

\maketitle

\section{Introduction}
The first instances of twisted algebras by modifying identities appeared first in Mathematical physics were $q$-deformations of Witt and Virasoro algebras using $\sigma$-derivations lead to a twisted  Jacobi identity, see for example \cite{AizawaSaito,ChaiIsKuLuk,CurtrZachos1,Kassel1}. This structure, called  Hom-Lie algebras, were investigated by Hartwig, Larsson and Silvestrov  in   \cite{HLS,LS1}. The corresponding associative type objects, called Hom-associative algebras were introduced by Makhlouf and Silvestrov in \cite{MakSil-def}. The enveloping algebras of Hom-Lie algebras were studied by Yau in \cite{Yau:EnvLieAlg}. The dual notions, Hom-coalgebras, Hom-bialgebras, Hom-Hopf algebras and Hom-Lie coalgebras,  were developed first in \cite{HomHopf,HomAlgHomCoalg}. Then the  study was  enhanced  in \cite{Yau:YangBaxter,Yau:comodule,Yau:HQuantumGroup1} and done with a categorical point of view in \cite{Canepl2009}.  Further developments and results about Hom-algebras could be found in \cite{AEM,AmmarMakhloufJA2010,FregierGohrSilv,Gohr,Yau:homology,Yau:YangBaxter2}.
 The main feature of Hom-algebra and Hom-coalgebra structures is that the classical identities are twisted by a  linear self map.

Quasi-Hopf algebras were introduced by Drinfeld  \cite{Drinfeld,DRIN1,DRIN2}. They are generalizations of Hopf algebras in which the coassociativity is replaced by a weaker condition. The formulation of enveloping algebra equipped with quasi-Hopf structure obtained Knizhnik-Zamolodchikov equations arises naturally in conformal field theory and in the theory of Vassiliev invariants. The quantum double introduced by Drinfeld is one the most important of quantum group constructions.  Many of the ideas of constructions from the theory of Hopf algebras have analogues in the quasi-Hopf algebra setting. Quasi-triangular quasi-Hopf algebra quantum double was introduced by Majid using categorical Tannaka-Krein reconstruction methods \cite{ Majid,MajidPink}. He also introduced the concept of dual quasi-Hopf algebra structure. For further developments on quasi-Hopf algebras see \cite{BulacuPanaite,BulacuCaneTorr,EtingofGelaki,Kabook,Schauenb1,Schauenb2,Schauenb3}.

The aim of this paper is to give an overview of the theory of Hom-Hopf algebras  and to introduce and study the notion of Hom-quasi-bialgebra  (HQ-bialgebra). We develop, for this type of algebras, the concept of gauge transformation and Drinfeld's twist construction. This Drinfeld's twisting construction ensure that any Hom-bialgebra could be twisted into a HQ-bialgebra  and that the class of HQ-bialgebras is itself closed under the twisting operation. 

The paper is organized as follows.
Section~\ref{Review} reviews the basic definitions and properties of
Hom-bialgebras and  Hom-Hopf 
 algebras. In Section~\ref{Quasi}, we introduce the notion of Hom-quasi-bialgebra and give  a construction which deform any  given HQ-bialgebra with a HQ-bialgebra morphism to a new HQ-bialgebra. In particular, any quasi-bialgebra with a quasi-bialgebra morphism gives rise to a HQ-bialgebra. Moreover, we  define quasi-triangular HQ-bialgebra. Section~\ref{DrinfeldTwist} deals with gauge transformation. We extend the Drinfeld's twist construction to HQ-bialgebras. An example of twisted quantum doubles is provided in Section~\ref{ExampleDrinfeldTwist}.

\section{Overview of Hom-associative algebras, Hom-coalgebras and Hom-bialgebras}\label{Review}
In this section we summarize definitions and describe some of basic
properties of Hom-algebras, Hom-coalgebras, Hom-bialgebras and Hom-Hopf algebras
which generalize the classical algebra, coalgebra, bialgebra and Hopf algebra
structures.

Throughout  this paper $\K$ denotes a ground field  and $A$ is a $\K$-module. In the sequel we denote by  $\tau_{ij}$  linear maps $\tau :A^{\otimes n}\rightarrow A^{\otimes n}$ 
where $\tau_{i j}(x_1\otimes\cdots\otimes x_i\otimes \cdots \otimes x_j \otimes \cdots\otimes x_n) = (x_1\otimes\cdots\otimes x_j\otimes \cdots \otimes x_i \otimes \cdots\otimes x_n)$.\\

We mean by a Hom-algebra  a triple $(A, \mu, \alpha)$ in which   $\mu : A^{\otimes 2} \to A$ is a linear map and $\alpha : A \to A$ is a linear self map. The linear map  $\mu^{op} : A^{\otimes 2} \to A$ denotes the opposite map, i.e. $\mu^{op} = \mu\circ\tau_{12} $.  A  Hom-coalgebra is a triple $(A, \Delta, \alpha)$ in which   $\Delta :A  \to A^{\otimes 2}$ is a linear map and $\alpha : A \to A$ is a linear self map. The linear map  $\Delta^{op} : A  \to A^{\otimes 2}$ denotes the opposite map, i.e. $\Delta^{op} = \tau_{12} \circ \Delta$.
For a linear self-map $\alpha : A \to A$, we denote by $\alpha^n$ the $n$-fold composition of $n$ copies of $\alpha$, with $\alpha^0 \cong id
$. A Hom-algebra $(A, \mu, \alpha)$ (resp. a Hom-coalgebra $(A, \Delta, \alpha)$) is said to be \emph{multiplicative}  if $\alpha \circ \mu = \mu \circ \alpha^{\otimes 2}$ (resp. $\alpha ^{\otimes 2}\circ \Delta = \Delta \circ \alpha$). The Hom-algebra is called \emph{commutative} if $\mu = \mu^{op}$ and the Hom-coalgebra is called \emph{cocommutative} if $\Delta = \Delta^{op}$.
  Classical algebras or  coalgebras are also regarded as a Hom-algebras or Hom-coalgebras with identity twisting map. Given a Hom-algebra $(A, \mu, \alpha)$, we often use the abbreviation $\mu(x, y) = xy$ for $x, y \in A$. Likewise, for a Hom-coalgebra $(A,\Delta,\alpha)$, we will use Sweedler's notation $\Delta(x) = \sum_{(x)} x_1 \otimes x_2$ but often omit the symbol of summation. When the Hom-algebra (resp. Hom-coalgebra) is multiplicative, we also say that $\alpha$ is multiplicative for $\mu$ (resp.  $\Delta$). \\

\begin{definition}
A  \emph{Hom-associative algebra} is a Hom-algebra $(A, \mu, \alpha)$ satisfying 
\begin{equation}
 \mu \circ (\mu \otimes \alpha) =  \mu \circ (\alpha \otimes \mu).
\end{equation}

 A Hom--associative algebra $A$ is called  \emph{unital} if there exists a linear map $\eta: \K\to A$  such that 
\begin{equation}\label{unit}
\mu\circ\left(  id
_{A}\otimes\eta\right)  =\mu\circ\left(  \eta\otimes
id
_{A}\right)  =\alpha.
\end{equation}
The unit element is $1_A=\eta(1_\K)$. We refer to a unital Hom-associative algebra with a quadruple  $(A, \mu, \alpha,\eta)$.
 \end{definition}
We call Hom-associator the linear map $
\as_{A}$ defined by $\mu \circ (\mu \otimes \alpha - \alpha \otimes \mu)$. In term of elements, we have 
$\as_{A}(x, y, z) =\ (xy)\alpha(z) - \alpha(x)(yz),$ for $x, y, z \in A$.

We recover the usual definitions of  associator and associative algebra  when the twisting map $\alpha$ is the identity map .

Given a unital Hom-associative algebra. An element $x\in A$ is said to be \emph{invertible} and $x^{-1}$  is its inverse if $$x x^{-1}= x^{-1} x=1_A.$$
Notice that $x y=z$ implies $\alpha^2 (x)=z \alpha(y^{-1}).$
Indeed, if $x y=z$ then $(x y)\alpha(y^{-1})=z\alpha(y^{-1})$. By Hom-associativity, we have $\alpha (x)( yy^{-1})=z\alpha(y^{-1})$ which is equivalent to $\alpha (x)1_A=z\alpha(y^{-1})$ and then to $\alpha^2 (x)=z \alpha(y^{-1}).$

Let $\left( A,\mu,\alpha \right)$ and $ \left( A^{\prime},\mu^{\prime}, \alpha^{\prime}\right)$ be two Hom-algebras. A linear map $f\ : A \rightarrow A^{\prime}$ is a \emph{ Hom-algebras morphism } if
\[
\mu^{\prime} \circ (f \otimes f) = f \circ \mu \quad \text{ and } \qquad f \circ \alpha = \alpha^{\prime}\circ f.
\]
It is said to be a \emph{weak Hom-algebras morphism} if holds only the first condition. If furthermore, the algebras are unital then $f(1_A)=1_{A^{\prime}}.$

\begin{example}\label{example1ass}
Let $\{x_1,x_2,x_3\}$  be a basis of a $3$-dimensional vector space
$A$ over $\K$. The following multiplication $\mu$ and linear map
$\alpha$ on $A=\K^3$ define a Hom-associative algebra over $\K${\rm:}
\[
\begin{array}{ll}
  \begin{array}{lll}
  \mu ( x_1,x_1)&=& a x_1, \ \\
  \mu ( x_1,x_2)&=&\mu ( x_2,x_1)=a x_2,\\
  \mu ( x_1,x_3)&=&\mu ( x_3,x_1)=b x_3,\\
  \end{array}
 & \quad
  \begin{array}{lll}
  \mu ( x_2,x_2)&=& a x_2, \ \\
  \mu ( x_2, x_3)&=& b x_3, \ \\
  \mu ( x_3,x_2)&=& \mu ( x_3,x_3)=0,
  \end{array}
\end{array}
\]

\[
\alpha(x_1) = a x_1, \quad
\alpha(x_2) = a x_2, \quad
\alpha(x_3) = b x_3,
\]
where $a,b$ are parameters in $\K$. Over a field of characteristic $0$, this algebra is not associative when $a\neq b$ and $b\neq 0$, since
\[
\mu(\mu(x_1,x_1),x_3))- \mu(x_1,\mu(x_1,x_3))=(a-b)b x_3.
\]
\end{example}

The categories of Hom-associative algebras  is closed under twisting self-weak morphisms.
The following Theorem  gives an easy way to provide new Hom-associative algebras with a given Hom-associative algebra and an algebra morphism. Then  one may  twist classical  associative algebra structure to  Hom-associative algebra structure. This procedure is called a twisting principle.

\begin{theorem}[\cite{Yau:homology,Yau:Poisson}] \label{twist-ass_Lie}
 Let $A= (A, \mu, \alpha,\eta)$ be a unital  Hom-associative algebra  and $\beta$ be a weak morphism, then $A_\beta = (A, \mu_\beta = \beta \circ \mu, \beta\circ\alpha,\eta)$ is a unital  Hom-associative algebra.

 In particular, let $A = (A, \mu,\eta)$ be a unital associative algebra and $\alpha : A \to A$ be an algebra morphism, i.e. $\alpha \circ \mu = \mu \circ \alpha^{\otimes 2}$. Then $A_\alpha = (A, \mu_\alpha = \alpha \circ \mu, \alpha,\eta)$ is a unital Hom-associative algebra.
\end{theorem}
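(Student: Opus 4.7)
The plan is to verify the Hom-associativity axiom and the unit condition for the twisted data $(A, \mu_\beta, \beta\circ\alpha, \eta)$, exploiting the weak morphism property $\beta\circ\mu = \mu\circ(\beta\otimes\beta)$ to move $\beta$ across $\mu$ freely, and then invoking the Hom-associativity of the original structure $(A,\mu,\alpha)$.

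For the Hom-associativity of $\mu_\beta$ with respect to the new twisting $\beta\circ\alpha$, I will expand both sides of
\[
\mu_\beta\circ(\mu_\beta\otimes(\beta\circ\alpha)) = \mu_\beta\circ((\beta\circ\alpha)\otimes\mu_\beta)
\]
on generic elements $x,y,z$. On the left, peeling off the outer $\beta$ and applying the weak morphism identity $\mu(\beta(a),\beta(b)) = \beta(\mu(a,b))$ with $a=\mu(x,y)$ and $b=\alpha(z)$ produces $\beta^{2}\bigl(\mu(\mu(x,y),\alpha(z))\bigr)$. The right-hand side simplifies to $\beta^{2}\bigl(\mu(\alpha(x),\mu(y,z))\bigr)$ by the same manipulation, and the two coincide by the Hom-associativity of $(A,\mu,\alpha)$.

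For the unit, I will check $\mu_\beta\circ(\id_A\otimes\eta) = \mu_\beta\circ(\eta\otimes\id_A) = \beta\circ\alpha$ directly: writing $\mu_\beta(x,\eta(1_\K)) = \beta(\mu(x,1_A)) = \beta(\alpha(x))$ using the unit axiom~\eqref{unit} for $A$, and symmetrically on the other side. It is worth noting that this step does \emph{not} require $\beta(1_A)=1_A$, since $1_A$ is absorbed by $\mu$ before $\beta$ acts.

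The particular case $\alpha=\id_A$ of the first part yields the second assertion for free: a classical unital associative algebra $(A,\mu,\eta)$ is a unital Hom-associative algebra with trivial twisting, and an algebra morphism $\alpha\colon A\to A$ is precisely a weak self-morphism in this sense, so the construction gives $A_\alpha=(A,\alpha\circ\mu,\alpha,\eta)$. I do not anticipate any serious obstacle; the only thing to watch is the bookkeeping of how many copies of $\beta$ are pulled through $\mu$, but since the weak morphism identity is applied symmetrically on both sides of the Hom-associativity equation, the powers of $\beta$ match automatically and no additional hypothesis on $\beta$ is needed beyond multiplicativity.
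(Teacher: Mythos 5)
Your proof is correct and rests on the same mechanism as the paper's: pull the (weak) morphism through the multiplication twice so that both sides of the Hom-associativity identity for $\mu_\beta$ become $\beta^{2}$ applied to the corresponding sides of the original identity, and verify the unit axiom by observing that $\mu_\beta(x,1_A)=\beta(\mu(x,1_A))=\beta(\alpha(x))=(\beta\circ\alpha)(x)$, which indeed needs no hypothesis like $\beta(1_A)=1_A$. The only difference is one of presentation and scope: the paper writes the computation compactly via the associator, $\as_{A_\alpha}=\alpha^{2}\circ\as_{A}=0$, and does so only for the particular case of an associative algebra with an algebra morphism, whereas you prove the general statement elementwise and then obtain the particular case by specializing $\alpha=\id$ --- so your argument is, if anything, the more complete of the two.
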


\begin{proof}
We  do the proof in the particular case. We have
\begin{equation*}
\begin{split}
\as_{A_\alpha} &= \mu_\alpha \circ (\mu_\alpha \otimes \alpha - \alpha \otimes \mu_\alpha) \\
&= \alpha \circ \mu \circ \alpha^{\otimes 2} \circ (\mu \otimes id
 - id
 \otimes \mu) \\
&= \alpha^2 \circ \as_{A} = 0.
\end{split}
\end{equation*}
Since $(A,\mu)$ is associative, so $A_\alpha$ is a Hom-associative algebra.

Moreover the axiom \eqref{unit} is satisfied, indeed
\begin{equation*}
\beta\circ \mu\circ\left(  id
_{A}\otimes\eta\right)  =\beta\circ\mu\circ\left(  \eta\otimes
id
_{A}\right)  =\beta\circ\alpha.
\end{equation*}
\end{proof}

In the following we give an overview of the theory of Hom-bialgebras which was introduced in \cite{HomHopf,HomAlgHomCoalg} and enhanced in \cite{BEM2012,Canepl2009,Yau:YangBaxter,Yau:comodule,Yau:HQuantumGroup1}.
\begin{definition}

A \emph{Hom-coassociative coalgebra} is a Hom-coalgebra $\left( A,\Delta ,\alpha \right) $  satisfying
\begin{equation}\label{C1}
\left(\alpha \otimes \Delta \right) \circ \Delta = \left( \Delta \otimes \alpha\right) \circ \Delta.
\end{equation}
A Hom-coassociative coalgebra is said to be \emph{counital} if there exists a map $\epsilon
 :A\rightarrow \K$ satisfying
\begin{equation}\label{C2}
\left( id \otimes \epsilon
 \right) \circ \Delta = \alpha \qquad \text{ and }\qquad
\left( \epsilon
 \otimes id\right) \circ \Delta = \alpha.
\end{equation}
We refer to a counital Hom-coassociative coalgebra with a quadruple $\left( A,\Delta ,\alpha,\epsilon \right) $
\end{definition}

Let $\left( A,\Delta ,\alpha \right) $
and $\left( A^{\prime },\Delta^{\prime
},\alpha^{\prime }\right)
$ be two Hom-coalgebras (resp.  Hom-coassociative coalgebras). A
linear map  $f\ :A\rightarrow A^{\prime }$ is a \emph{morphism of  Hom-coalgebras} (resp. \emph{Hom-coassociative coalgebras}) if
\[
(f \otimes f)\circ \Delta = \Delta^{\prime} \circ f
\quad \text{ and} \quad f \circ\alpha = \alpha^{\prime}\circ f.
\]
It is said to be a \emph{weak Hom-coalgebras morphism } if holds only the first condition. If furthermore the Hom-coassociative coalgebras admit counits $\epsilon
$ and $\epsilon
^{\prime}$, we have moreover $\epsilon
 = \epsilon
^{\prime}\circ f $.

The category of coassociative Hom-coalgebras is closed under weak Hom-coalgebra morphisms. 
\begin{theorem} \label{thmConstrHomCoalg}
Let $(A,\Delta,\alpha,\epsilon
)$ be a counital Hom-coassociative coalgebra  and $\beta : A\rightarrow A$ be a weak Hom-coalgebra morphism. Then $(A,\Delta_\beta= \Delta \circ \beta,\alpha \circ \beta,\epsilon
)$  is a counital Hom-coassociative coalgebra.

In particular, let $(A,\Delta,\epsilon
)$ be a coalgebra and $\beta : A \to A$ be a coalgebra morphism. Then $(A,\Delta_\beta,\beta,\epsilon
)$ is a counital Hom-coassociative coalgebra.
\end{theorem}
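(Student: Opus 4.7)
The plan is to dualize the argument of Theorem~\ref{twist-ass_Lie}, exploiting the weak morphism property $(\beta\otimes\beta)\circ\Delta = \Delta\circ\beta$ to ``push'' $\beta$ through $\Delta$ at the cost of duplicating it on the other side. The two axioms to verify are the coassociativity \eqref{C1} for the new comultiplication $\Delta_\beta$ with new twisting map $\alpha\circ\beta$, and the counit condition \eqref{C2} for the unchanged $\epsilon$ against the new twisting.

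For coassociativity, I would expand the left-hand side
\begin{equation*}
((\alpha\circ\beta)\otimes \Delta_\beta)\circ\Delta_\beta = (\alpha\otimes\Delta)\circ(\beta\otimes\beta)\circ\Delta\circ\beta
\end{equation*}
and apply the weak morphism identity to turn $(\beta\otimes\beta)\circ\Delta$ into $\Delta\circ\beta$, obtaining $(\alpha\otimes\Delta)\circ\Delta\circ\beta^{2}$. A symmetric calculation reduces the right-hand side to $(\Delta\otimes\alpha)\circ\Delta\circ\beta^{2}$, and the equality of the two then follows from the original coassociativity of $\Delta$. This mirrors the step $\as_{A_\alpha}=\alpha^{2}\circ\as_{A}=0$ from the Hom-associative proof.

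For the counit axioms, I would simply note that
\begin{equation*}
(\id\otimes\epsilon)\circ\Delta_\beta = (\id\otimes\epsilon)\circ\Delta\circ\beta = \alpha\circ\beta,
\end{equation*}
by the original counit condition, and symmetrically $(\epsilon\otimes\id)\circ\Delta_\beta = \alpha\circ\beta$. No compatibility between $\epsilon$ and $\beta$ is required, consistent with $\beta$ being only a weak morphism. The particular case then falls out by setting $\alpha = \id_A$: a coalgebra morphism $\beta$ automatically satisfies $(\beta\otimes\beta)\circ\Delta=\Delta\circ\beta$, hence serves as a weak Hom-coalgebra morphism of the classical coalgebra viewed with trivial twist.

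I do not anticipate a substantive obstacle --- the argument is an entirely formal dualization of Theorem~\ref{twist-ass_Lie}. The only point requiring a little care is keeping tensor positions straight when commuting $\beta\otimes\beta$ through $\Delta$, and verifying that neither the counit nor the coassociativity step uses any cross-compatibility $\beta\circ\alpha = \alpha\circ\beta$, so that the ``weak'' hypothesis is indeed sufficient.
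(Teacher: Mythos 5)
Your proposal is correct and follows essentially the same route as the paper: both reduce $((\alpha\circ\beta)\otimes\Delta_\beta)\circ\Delta_\beta$ and its mirror to $(\alpha\otimes\Delta)\circ\Delta\circ\beta^{2}$ and $(\Delta\otimes\alpha)\circ\Delta\circ\beta^{2}$ via the weak-morphism identity, then invoke the original Hom-coassociativity, and both handle the counit by the same one-line computation. No discrepancies to report.
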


\begin{proof}
We show that $(A,\Delta_\beta,\alpha\circ\beta,\epsilon
)$ satisfies the axiom \eqref{C1}.

Indeed, using the fact that $(\beta \otimes \beta) \circ \Delta = \Delta \circ \beta$, we have
\begin{align*}
\left( \alpha\circ\beta\otimes \Delta_\beta \right)
\circ \Delta_\beta &= \left(\alpha\circ\beta \otimes \Delta\circ\beta\right) \circ \Delta\circ\beta \\
&=(\left( \alpha \otimes
\Delta\right) \circ\Delta)\circ \beta^2\\
&=(\left( \Delta\otimes\alpha
\right) \circ\Delta)\circ \beta^2\\
&=\left(\Delta_\beta \otimes\alpha\circ\beta
\right) \circ \Delta_\beta.
\end{align*}
Moreover, the axiom \eqref{C2} is also satisfied, since we have
\begin{equation*}
\left( id \otimes \epsilon
 \right) \circ \Delta_\beta = \left( id \otimes \epsilon
 \right) \circ \Delta \circ \beta = \alpha \circ \beta = \left( \epsilon
 \otimes id\right) \circ \Delta \circ \beta = \left( \epsilon
 \otimes id\right) \circ \Delta_\beta.
\end{equation*}
\end{proof}

\begin{example}
 Theorem \ref{thmConstrHomCoalg} leads to the following examples:

 Let $(A,\Delta,\alpha)$ be a multiplicative Hom-coassociative coalgebra. For any nonnegative integer $n$,  $(A,\Delta_{\alpha^n}, \alpha^{n+1})$ is a Hom-coassociative coalgebra.

\end{example}

We show that there is a duality between  Hom-associative and  Hom-coassociative structures.


\begin{theorem}
Let $(A,\Delta,\alpha)$ be a  Hom-coassociative coalgebra. Then its dual vector space is provided with a structure of Hom-associative  algebra $(A^*,\Delta^*,\alpha^*)$, where $\Delta^*,\alpha^*$ are the transpose map. Moreover, the Hom-associative  algebra is  unital  whenever  $A$ is counital.
\end{theorem}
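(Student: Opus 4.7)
The plan is to dualize everything mechanically, using the standard rules $(f\circ g)^{\ast}=g^{\ast}\circ f^{\ast}$ and $(f\otimes g)^{\ast}=f^{\ast}\otimes g^{\ast}$, and then observe that the Hom-coassociativity axiom \eqref{C1} on $A$ is literally the transpose of the Hom-associativity axiom on $A^{\ast}$. A minor caveat: the equality $(A\otimes A)^{\ast}\cong A^{\ast}\otimes A^{\ast}$ is only automatic in finite dimensions; in the infinite-dimensional case one uses the canonical embedding $A^{\ast}\otimes A^{\ast}\hookrightarrow (A\otimes A)^{\ast}$ and regards $\Delta^{\ast}$ as a map $A^{\ast}\otimes A^{\ast}\to A^{\ast}$ via this embedding. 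I will state this identification up front and then the rest is a one-line transposition.

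First, set $\mu:=\Delta^{\ast}\colon A^{\ast}\otimes A^{\ast}\to A^{\ast}$ and $\alpha^{\ast}\colon A^{\ast}\to A^{\ast}$. Apply the dual to the Hom-coassociativity identity
\begin{equation*}
(\alpha\otimes\Delta)\circ\Delta \;=\; (\Delta\otimes\alpha)\circ\Delta .
\end{equation*}
Transposing both sides and distributing the dual over composition and tensor products gives
\begin{equation*}
\Delta^{\ast}\circ(\alpha^{\ast}\otimes\Delta^{\ast}) \;=\; \Delta^{\ast}\circ(\Delta^{\ast}\otimes\alpha^{\ast}),
\end{equation*}
which is precisely $\mu\circ(\alpha^{\ast}\otimes\mu)=\mu\circ(\mu\otimes\alpha^{\ast})$, so $(A^{\ast},\mu,\alpha^{\ast})$ is Hom-associative.

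For the unital statement, assume $A$ has a counit $\epsilon\colon A\to\K$ and define $\eta\colon\K\to A^{\ast}$ by $\eta:=\epsilon^{\ast}$ (using $\K^{\ast}\cong\K$); the unit element is $1_{A^{\ast}}=\eta(1_{\K})=\epsilon$. Dualizing the two counit axioms \eqref{C2}, namely $(\id\otimes\epsilon)\circ\Delta=\alpha$ and $(\epsilon\otimes\id)\circ\Delta=\alpha$, yields
\begin{equation*}
\Delta^{\ast}\circ(\id_{A^{\ast}}\otimes\eta)=\alpha^{\ast}\quad\text{and}\quad\Delta^{\ast}\circ(\eta\otimes\id_{A^{\ast}})=\alpha^{\ast},
\end{equation*}
which is exactly the unit axiom \eqref{unit} for $(A^{\ast},\mu,\alpha^{\ast},\eta)$.

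The only genuine obstacle is the infinite-dimensional identification of $(A\otimes A)^{\ast}$ with $A^{\ast}\otimes A^{\ast}$; everything else is a formal dualization. If one prefers to avoid this point entirely, one can either restrict the statement to finite-dimensional $A$, or work with the finite dual $A^{\circ}$ instead of $A^{\ast}$, on which the comultiplication dualizes cleanly. Apart from that, no calculation beyond transposition is needed, and multiplicativity of $\alpha^{\ast}$ with respect to $\mu$ follows in the same way from multiplicativity of $\alpha$ with respect to $\Delta$ when $A$ is multiplicative.
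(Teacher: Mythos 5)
Your proof is correct and is essentially the same argument as the paper's: the paper also defines $\mu=\Delta^{\ast}$ and derives Hom-associativity and the unit axiom by transposing the Hom-coassociativity and counit identities, merely writing the transposition out element-wise via the pairing $\langle\Delta(x),f\otimes g\rangle$ instead of at the level of maps. Your explicit remark about the embedding $A^{\ast}\otimes A^{\ast}\hookrightarrow(A\otimes A)^{\ast}$ in infinite dimensions is a welcome clarification that the paper leaves implicit (it discusses that issue only for the dual of a Hom-algebra, where it is a genuine obstruction, unlike here).
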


\begin{proof}
The product $\mu = \Delta^*$ is defined from $A^* \otimes A^*$ to $A^*$ by
\begin{equation*}
(fg)(x) = \Delta^*(f,g)(x) = \langle \Delta(x),f \otimes g \rangle = (f \otimes g)(\Delta(x)) = \sum_{(x)} f(x_1)g(x_2), \quad \forall x \in A
\end{equation*}
where $\langle \cdot,\cdot \rangle$ is the natural pairing between the vector space $A \otimes A$ and its dual vector space. For $f,g,h \in A^*$ and $x \in A$, we have
\begin{gather*}
(fg) \alpha^*(h)(x) = \langle (\Delta \otimes \alpha) \circ \Delta(x),f \otimes g \otimes h \rangle
\shortintertext{and}
\alpha^*(f)(gh)(x) = \langle (\alpha \otimes \Delta) \circ \Delta(x),f \otimes g \otimes h \rangle.
\end{gather*}
So the Hom-associativity $\mu \circ (\mu \otimes \alpha^* - \alpha^* \otimes \mu) = 0$ follows from the Hom-coassociativity $(\Delta \otimes \alpha - \alpha \otimes \Delta) \circ \Delta = 0$. \\
Moreover, if $A$ has a counit $\epsilon
$ satisfying $(id \otimes \epsilon
) \circ \Delta = \alpha = (\epsilon
 \otimes id) \circ \Delta$ then for $f \in A^*$ and $x \in A$ we have
\begin{gather*}
(\epsilon
 f)(x) = \sum_{(x)} \epsilon
(x_1) f(x_2) = \sum_{(x)} f(\epsilon
(x_1) x_2) = f(\alpha(x)) = \alpha^*(f)(x) \\
\shortintertext{and}
(f \epsilon
)(x) = \sum_{(x)} f(x_1) \epsilon
(x_2) = \sum_{(x)} f(x_1 \epsilon
(x_2)) = f(\alpha(x)) = \alpha^*(f)(x),
\end{gather*}
which shows that $\epsilon
$ is the unit in $A^*$.
\end{proof}

The dual of a Hom-algebra $(A,\mu,\alpha)$ is not always a Hom-coalgebra, because the coproduct does not land in the good space: $\mu^* : A^* \to (A \otimes A)^* \supsetneq A^* \otimes A^*$. Nevertheless, it is the case if the Hom-algebra is finite dimensional, since $(A\otimes A)^* = A^* \otimes A^*$.

In the general case, for any Hom-algebra $A$, define
\begin{equation*}
A^\circ = \{f \in A^*, f(I)=0\ \text{for some cofinite ideal $I$ of $A$}\},
\end{equation*}
where a \emph{cofinite ideal} $I$ is an ideal $I \subset A$ such that $A/I$ is finite-dimensional. 
Recall that $I$ is 
an ideal of $A$ if for 
$x \in I$ and $y \in A$,  we have $x y\in I$, $y x\in I$ and $\alpha (x)\in I$.

$A^\circ$ is a subspace of $A^*$ since it is closed under multiplication by scalars and the sum of two elements of $A^\circ$ is again in $A^\circ$ since the intersection of two cofinite ideals is again a cofinite ideal. If $A$ is finite dimensional, of course $A^\circ = A^*$.
\begin{lemma}
Let $A$ and $B$ be two Hom-associative algebras and $f : A \to B$ be a Hom-algebra morphism. Then the dual map $f^* : B^* \to A^*$ satisfies $f^*(B^\circ)\subset A^\circ.$
\end{lemma}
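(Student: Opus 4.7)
The plan is to show that for any $g \in B^\circ$, the pullback $f^*(g) = g \circ f$ vanishes on a cofinite ideal of $A$, and the natural candidate for that ideal is $I := f^{-1}(J)$, where $J$ is a cofinite ideal of $B$ on which $g$ vanishes. Once such an $I$ is produced, the inclusion $f^*(g)(I) = g(f(I)) \subset g(J) = 0$ is immediate.

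First, I would verify that $I = f^{-1}(J)$ is a (Hom-)ideal of $A$. Take $x \in I$, so $f(x) \in J$. For any $y \in A$, the morphism property $f \circ \mu_A = \mu_B \circ (f \otimes f)$ gives $f(xy) = f(x)f(y) \in J$ because $J$ is an ideal of $B$; similarly $f(yx) \in J$. The compatibility $f \circ \alpha = \alpha' \circ f$ gives $f(\alpha(x)) = \alpha'(f(x)) \in J$, since ideals are required to be $\alpha$-stable. Hence $xy, yx, \alpha(x) \in I$, and $I$ is closed under scalars and sums trivially, so $I$ is an ideal in the sense defined in the paper.

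Next, I would show $I$ is cofinite. The map $\bar f : A/I \to B/J$ sending $x + I \mapsto f(x) + J$ is well-defined (if $x \in I$ then $f(x) \in J$) and linear, and it is injective because $\bar f(x + I) = 0$ means $f(x) \in J$, i.e.\ $x \in I$. Since $B/J$ is finite-dimensional by cofiniteness of $J$, so is its subspace $A/I$, establishing cofiniteness of $I$.

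There is no real obstacle here beyond the bookkeeping that the definition of ideal used in the paper includes $\alpha$-stability, which is why the compatibility $f \circ \alpha = \alpha' \circ f$ must be invoked; a non-Hom-algebra morphism would not suffice. Combining the two steps, $f^*(g) = g \circ f$ vanishes on the cofinite ideal $I$ of $A$, so $f^*(g) \in A^\circ$, which proves $f^*(B^\circ) \subset A^\circ$.
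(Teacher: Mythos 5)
Your proof is correct and takes essentially the same route as the paper's: both work with the ideal $I=f^{-1}(J)$, verify its $\alpha$-stability using $f\circ\alpha=\alpha'\circ f$, and obtain cofiniteness from the induced injection $A/I\to B/J$ into a finite-dimensional space. Your write-up is in fact slightly cleaner than the paper's at the step showing $I$ is an ideal.
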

\begin{proof}
Let $J$ be a cofinite ideal of $B$ and $p:B\to B/J$ be the canonical map. Set $\widetilde{f}=p\circ f:A\to B/J$.

Observe that $f^{-1}(J)$ is an ideal of $A$. Indeed, for $x\in A$ we have  $f(x f^{-1}(J))=f(x)f(f^{-1}(J))=f(x) J \subset J$. Therefore $x  f^{-1}(J)\subset  f^{-1}(J)$. Also $\alpha_A( f^{-1}(J))=f^{-1}(\alpha_B(J))\subset f^{-1}(J)$.

We have the following exact sequence 
$0\to f^{-1}(J)\xrightarrow{i}  A\xrightarrow{\widetilde{f}} B/J \rightarrow 0.
$ 
Define a map $f_\star:A/f^{-1}(J) \to B/J$ by $f_\star(x+f^{-1}(J))=f(x)$. It induces an isomorphism $A/f^{-1}(J)\to Im \widetilde{f}$. Hence $A/f^{-1}(J)$ is finite dimensional.

Likewise, we have  $f^\star (B^\circ)\subset A^\circ $. Indeed, let $b^\star \in B$ such that $ker (b^\star) \supset J$. Then $ker (f^\star (b^\star)) \supset f^{-1}(J)$, since 
$< f^\star (b^\star), f^{-1}(J)>=< b^\star, f(f^{-1}(J))>=< b^\star, J>=0.$
\end{proof}
Using this lemma one may prove, similarly to \cite[Lemma 6.0.1]{Sweedler} that  $A^\circ \otimes A^\circ = (A \otimes A)^\circ$ and  the dual $\mu^* : A^* \to (A \otimes A)^*$ of the multiplication $\mu : A \otimes A\rightarrow A$ satisfies $\mu^*(A^\circ)  \subset A^\circ \otimes A^\circ$. Indeed, for $f \in A^*,\ x,y \in A$, we have $\langle \mu^*(f),x \otimes y \rangle = \langle f,xy \rangle$. So if $I$ is a cofinite ideal such that $f(I) = 0$, then $I \otimes A + A \otimes I$ is a cofinite ideal of $A \otimes A$ which vanish on $\mu^*(f)$.


\begin{theorem}
Let $(A,\mu,\alpha)$ be a multiplicative Hom-associative algebra. Then its finite dual is provided with a structure of  Hom-coassociative coalgebra $(A^\circ,\Delta,\alpha^\circ)$, where  $\Delta = \mu^\circ = \mu^*|_{A^\circ}$. 
  Moreover,  the Hom-coassociative coalgebra  is counital  whenever  $A$ is unital, where  $\epsilon
 : A^\circ \to \K$  is defined by $\epsilon
(f) = f(1_A)$.
\end{theorem}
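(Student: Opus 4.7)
The proof proceeds by dualizing the argument for the previous theorem. The statement to verify has four parts: (i) $\alpha^\circ := \alpha^*|_{A^\circ}$ maps $A^\circ$ to itself; (ii) $\Delta := \mu^*|_{A^\circ}$ lands inside $A^\circ \otimes A^\circ$; (iii) the Hom-coassociativity axiom \eqref{C1} holds on $A^\circ$; and (iv) when $A$ is unital, the map $\epsilon(f) := f(1_A)$ is a counit in the sense of \eqref{C2}.

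The first two items are essentially bookkeeping. For (i), the multiplicativity condition $\alpha \circ \mu = \mu \circ \alpha^{\otimes 2}$ says exactly that $\alpha : A \to A$ is a Hom-algebra morphism of $A$ into itself, so the preceding lemma immediately yields $\alpha^*(A^\circ) \subset A^\circ$. For (ii), the paragraph just before the theorem already establishes $\mu^*(A^\circ) \subset A^\circ \otimes A^\circ$ under the identification $(A \otimes A)^\circ = A^\circ \otimes A^\circ$, via the observation that $I \otimes A + A \otimes I$ is a cofinite ideal of $A \otimes A$ whenever $I$ is a cofinite ideal of $A$.

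For (iii), the strategy is to evaluate both iterated coproducts at an arbitrary element $x \otimes y \otimes z \in A^{\otimes 3}$ using the natural pairing. Writing $\Delta(f) = \sum f_1 \otimes f_2$ with the defining property $\sum f_1(u) f_2(v) = f(uv)$, a short unwinding shows
\begin{equation*}
\bigl\langle (\Delta \otimes \alpha^\circ) \circ \Delta(f),\, x \otimes y \otimes z \bigr\rangle = f\bigl((xy)\alpha(z)\bigr),
\end{equation*}
and symmetrically
\begin{equation*}
\bigl\langle (\alpha^\circ \otimes \Delta) \circ \Delta(f),\, x \otimes y \otimes z \bigr\rangle = f\bigl(\alpha(x)(yz)\bigr).
\end{equation*}
Hence \eqref{C1} is precisely the transpose of the Hom-associativity identity $(xy)\alpha(z) = \alpha(x)(yz)$ of $\mu$.

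For (iv), the unit axiom \eqref{unit} gives $x \cdot 1_A = \alpha(x) = 1_A \cdot x$, so
\begin{equation*}
\bigl((\id \otimes \epsilon) \circ \Delta(f)\bigr)(x) = \sum f_1(x)\, f_2(1_A) = f(x \cdot 1_A) = f(\alpha(x)) = \alpha^\circ(f)(x),
\end{equation*}
with the analogous computation for $(\epsilon \otimes \id) \circ \Delta$. No deep obstacle arises; the only points requiring care are (a) recognizing that multiplicativity of $A$ is exactly what lets the preceding lemma apply to $\alpha$ itself, and (b) aligning the three arguments in $A^{\otimes 3}$ with the Sweedler components of $\Delta$ in the correct order when verifying the pairings above.
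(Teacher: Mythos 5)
Your proof is correct and follows essentially the same route as the paper: both verify Hom-coassociativity by pairing the two iterated coproducts against $x \otimes y \otimes z$ to reduce \eqref{C1} to the transpose of $\mu \circ (\mu \otimes \alpha) = \mu \circ (\alpha \otimes \mu)$, and both check the counit via $f(x \cdot 1_A) = f(\alpha(x)) = \alpha^\circ(f)(x)$. Your items (i) and (ii) simply make explicit the well-definedness facts ($\alpha^*(A^\circ) \subset A^\circ$ via the lemma applied to the morphism $\alpha$, and $\mu^*(A^\circ) \subset A^\circ \otimes A^\circ$) that the paper establishes in the discussion immediately preceding the theorem rather than inside its proof.
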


\begin{proof}
The coproduct $\Delta$ is defined from $A^\circ$ to $A^\circ \otimes A^\circ$ by
\begin{equation*}
\Delta(f)(x \otimes y) = \mu^*|_{A^\circ}(f)(x \otimes y) = \langle \mu(x \otimes y),f \rangle = f(xy), \quad x,y \in A.
\end{equation*}
For $f,g,h \in A^\circ$ and $x,y \in A$, we have
\begin{gather*}
(\Delta \circ \alpha^\circ) \circ \Delta(f)(x \otimes y \otimes z) = \langle \mu \circ (\mu \otimes \alpha) (x \otimes y \otimes z),f \rangle
\shortintertext{and}
(\alpha^\circ \circ \Delta) \circ \Delta(f)(x \otimes y \otimes z) = \langle \mu \circ (\alpha\otimes \mu) (x \otimes y \otimes z),f \rangle.
\end{gather*}
So the Hom-coassociativity $(\Delta \otimes \alpha^\circ - \alpha^\circ \otimes \Delta) \circ \Delta = 0$  follows from the Hom-associativity $\mu \circ (\mu \otimes \alpha - \alpha \otimes \mu) = 0$. \\
Moreover, if $A$ has a unit $\eta$ satisfying $\mu \circ (id \otimes \eta) = \alpha = \mu \circ (\eta \otimes id)$ then for $f \in A^\circ$ and $x \in A$ we have
\begin{gather*}
(\epsilon
 \otimes id) \circ \Delta(f)(x) = f(1_A\cdot x) = f(\alpha(x)) = \alpha^\circ(f)(x) \\
\intertext{and}
(id \otimes \epsilon
) \circ \Delta(f)(x) = f(x\cdot 1_A) = f(\alpha(x)) = \alpha^\circ(f)(x),
\end{gather*}
which shows that $\epsilon
 : A^\circ \to \K$, $f \mapsto f(1)$ is the counit in $A^\circ$.
\end{proof}
\begin{remark}
When $(A,\mu,\alpha)$ is a finite dimensional  Hom-associative algebra, the assumption of multiplicativity is not needed. The dual is provided with a structure of  Hom-coassociative coalgebra $(A^\star,\Delta^\star,\alpha^\star)$, see \cite{HomAlgHomCoalg}. 
\end{remark}

Now, we consider a structure combining Hom-associative algebras and Hom-coassociative coalgebras.
\begin{definition}
A \emph{Hom-bialgebra} is a tuple $\left( A,\mu ,\alpha,\eta ,\Delta ,\beta,\epsilon
 \right)$ where
\begin{enumerate}
\item $\left( A,\mu ,\alpha,\eta \right)$
is a Hom-associative algebra with a unit  $\eta$,
\item  $\left( A,\Delta ,\beta,\epsilon
 \right)$ is a Hom-coassociative coalgebra with a counit $\epsilon
$,
\item  the linear maps $\Delta $ and $\epsilon
 $ are compatible with the multiplication  $\mu$ and the unit $\eta$, that is for $x,y\in A$
\begin{eqnarray}
&& \Delta \left(1_A \right) = 1_A \otimes 1_A, \\
&& \Delta \left( \mu(x\otimes y)\right) = \Delta \left( x \right) \cdot \Delta \left( y\right)
=\sum_{(x)(y)}\mu(x_1 \otimes y_1) \otimes \mu( x_2 \otimes y_2), \\
&& \epsilon
 \left( 1_A\right) =1, \\
&& \epsilon
 \left( \mu(x\otimes y)\right)
=\epsilon
 \left(x\right) \epsilon
 \left( y\right), \\
&& \epsilon
\circ \alpha \left( x\right)
=\epsilon
 \left( x\right),
\end{eqnarray}
\end{enumerate}
where the dot "$\cdot$" denotes the
multiplication on tensor product.

If $\alpha = \beta$ the Hom-bialgebra is denoted $(A,\mu,\eta,\Delta,\epsilon
,\alpha)$.
\end{definition}

A \emph{Hom-bialgebra morphism} (resp. weak Hom-bialgebra morphism) is a morphism which is either a Hom-algebra and Hom-coalgebra morphism (resp. weak morphism).

By combining Theorems \ref{twist-ass_Lie} and  \ref{thmConstrHomCoalg}, we obtain the following  Proposition. 
\begin{proposition}
Let $(A,\mu,\eta,\Delta,\epsilon,\alpha)$ be a Hom-bialgebra  and $\beta : A\rightarrow A$ be a Hom-bialgebra morphism. Then $(A,\mu_\beta,\eta,\Delta_\beta,\epsilon,\beta\circ \alpha)$ is a Hom-bialgebra.

In particular, if $(A,\mu,\eta,\Delta,\epsilon)$ be a bialgebra  and $\beta : A\rightarrow A$ be a bialgebra morphism  then $(A,\mu_\beta,\eta,\Delta_\beta,\epsilon,\beta)$ is a Hom-bialgebra.

\end{proposition}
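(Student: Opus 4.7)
The plan is to split the verification into three parts --- the Hom-associative algebra structure, the Hom-coassociative coalgebra structure, and the bialgebra compatibility axioms --- exploiting the fact that a Hom-bialgebra morphism $\beta$ is simultaneously a (unital) Hom-algebra morphism and a (counital) Hom-coalgebra morphism, and that it commutes with $\alpha$. First, since $\beta\circ\mu=\mu\circ(\beta\otimes\beta)$ and $\beta(1_A)=1_A$, Theorem~\ref{twist-ass_Lie} applied with $\beta$ as the weak morphism produces the unital Hom-associative algebra $(A,\mu_\beta,\beta\circ\alpha,\eta)$. Dually, since $(\beta\otimes\beta)\circ\Delta=\Delta\circ\beta$ and $\epsilon\circ\beta=\epsilon$, Theorem~\ref{thmConstrHomCoalg} applied to the same $\beta$ yields the counital Hom-coassociative coalgebra $(A,\Delta_\beta,\alpha\circ\beta,\epsilon)$. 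Because $\beta$ is a Hom-bialgebra morphism one has $\beta\circ\alpha=\alpha\circ\beta$, so the two twisting maps coincide and we may use the common value $\beta\circ\alpha$ throughout.

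Next I would verify the remaining bialgebra compatibility axioms. The unit and counit conditions are immediate: $\Delta_\beta(1_A)=\Delta(\beta(1_A))=\Delta(1_A)=1_A\otimes 1_A$; the relation $\epsilon(\mu_\beta(x\otimes y))=\epsilon(\beta(\mu(x,y)))=\epsilon(\mu(x,y))=\epsilon(x)\epsilon(y)$ follows from $\epsilon\circ\beta=\epsilon$ together with the original axiom; and $\epsilon\circ(\beta\circ\alpha)=\epsilon\circ\alpha=\epsilon$.

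The only computational step is the multiplicativity of $\Delta_\beta$ with respect to $\mu_\beta$. Here I would expand
\[
\Delta_\beta(\mu_\beta(x\otimes y))=\Delta\bigl(\beta^2(\mu(x\otimes y))\bigr)=\Delta\bigl(\mu(\beta^2(x)\otimes\beta^2(y))\bigr),
\]
using $\beta\circ\mu=\mu\circ(\beta\otimes\beta)$ twice. Applying the original bialgebra compatibility in $A$, together with $\Delta\circ\beta^2=(\beta^2\otimes\beta^2)\circ\Delta$, rewrites this in Sweedler notation as $\sum\mu(\beta^2(x_1)\otimes\beta^2(y_1))\otimes\mu(\beta^2(x_2)\otimes\beta^2(y_2))$. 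On the other side, unpacking $\Delta_\beta(x)\cdot\Delta_\beta(y)$ in the tensor-product algebra with product $\mu_\beta$, and once more using $\beta\circ\mu=\mu\circ(\beta\otimes\beta)$ and $(\beta\otimes\beta)\circ\Delta=\Delta\circ\beta$, yields the same expression. This is the sole step requiring actual calculation, and it is routine bookkeeping once the morphism identities are available; I anticipate no genuine conceptual obstacle.

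Finally, the ``in particular'' statement is the special case $\alpha=\id$: a classical bialgebra morphism trivially satisfies $\beta\circ\id=\id\circ\beta$ and thus qualifies as a Hom-bialgebra morphism in this setting, and $\beta\circ\id=\beta$ recovers the stated twisting map, so the general result specializes directly.
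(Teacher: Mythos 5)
Your proposal is correct and follows exactly the route the paper intends: the paper offers no written proof beyond the remark that the result is obtained ``by combining Theorems \ref{twist-ass_Lie} and \ref{thmConstrHomCoalg},'' which is precisely your first two steps. The only material you add is the explicit verification of the compatibility axioms (multiplicativity of $\Delta_\beta$ for $\mu_\beta$ and the unit/counit conditions), which the paper leaves implicit, and your computation there is correct.
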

This construction method of Hom-bialgebra, starting with a given Hom-bialgebra or a bialgebra  and a morphism,  is called twisting principle.

Notice that the category of Hom-bialgebra is not closed under weak Hom-bialgebra morphisms.

Combining  previous observations, we show that a finite dual of a Hom-bialgebra is a Hom-bialgebra. 
\begin{proposition}
Let  $\left( A,\mu,\eta, \Delta,\epsilon ,\alpha \right) $ be a Hom-bialgebra. Then the finite dual $\left( A^\circ ,\mu^\circ,\eta^\circ,  \Delta^\circ,\epsilon^\circ, \alpha^\circ  \right)$ is a Hom-bialgebra as well.
\end{proposition}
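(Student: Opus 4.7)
The plan is to assemble the Hom-bialgebra structure on $A^\circ$ by combining the two duality theorems established earlier in this section, and then to verify the bialgebra compatibility axioms on $A^\circ$ as duals of those on $A$.

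For the Hom-coassociative coalgebra structure on $A^\circ$, I would invoke the theorem on the finite dual of a multiplicative Hom-associative algebra applied to $(A,\mu,\alpha,\eta)$. This immediately yields the comultiplication $\mu^\circ=\mu^*|_{A^\circ}$, the twist $\alpha^\circ$, and the counit $\eta^\circ(f)=f(1_A)$; the required closure $\mu^*(A^\circ)\subset A^\circ\otimes A^\circ$ is already recorded in the excerpt.

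For the Hom-associative algebra structure on $A^\circ$, I would apply the duality theorem for Hom-coassociative coalgebras to $(A,\Delta,\alpha,\epsilon)$, giving a unital Hom-associative structure on $A^*$ with multiplication $(f\cdot g)(x)=\sum f(x_1)g(x_2)$, twist $\alpha^*$, and unit $\epsilon\in A^\circ$ (the augmentation ideal $\ker\epsilon$ being cofinite). The main technical obstacle I foresee is closure of $A^\circ$ under this new multiplication. For $f,g\in A^\circ$ vanishing on cofinite ideals $I_f,I_g$, I would set
\[
J=\{x\in A:\Delta(x)\in I_f\otimes A+A\otimes I_g\},
\]
on which $fg$ vanishes. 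Then $A/J$ embeds into $A/I_f\otimes A/I_g$ and so is finite-dimensional; $J$ is a two-sided ideal thanks to the bialgebra compatibility $\Delta(xy)=\Delta(x)\Delta(y)$ together with $I_f,I_g$ being ideals; and $\alpha(J)\subset J$ follows from the multiplicativity $\Delta\circ\alpha=\alpha^{\otimes 2}\circ\Delta$ and $\alpha$-stability of ideals. Hence $fg\in A^\circ$ and $(A^\circ,\Delta^\circ,\alpha^\circ,\epsilon^\circ)$ is a unital Hom-associative algebra.

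It remains to verify the five bialgebra compatibility axioms on $A^\circ$; each reduces, after pairing with a generic $x\in A$ or $x\otimes y\in A\otimes A$, to an identity on $A$. Explicitly, $\mu^\circ(\epsilon)=\epsilon\otimes\epsilon$ corresponds to $\epsilon(xy)=\epsilon(x)\epsilon(y)$; the morphism condition $\mu^\circ(f\cdot g)=\mu^\circ(f)\cdot\mu^\circ(g)$ corresponds to $\Delta(xy)=\Delta(x)\Delta(y)$; $\eta^\circ(\epsilon)=1$ corresponds to $\epsilon(1_A)=1$; $\eta^\circ(f\cdot g)=\eta^\circ(f)\eta^\circ(g)$ corresponds to $\Delta(1_A)=1_A\otimes 1_A$; and $\eta^\circ\circ\alpha^\circ=\eta^\circ$ corresponds to $\alpha(1_A)=1_A$, the standard compatibility between the unit and the twist. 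None of these last verifications introduces new ideas beyond direct manipulation of the structure maps and their duals.
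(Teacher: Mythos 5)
Your proposal is correct and follows exactly the route the paper intends, namely assembling the structure on $A^\circ$ from the two duality theorems and checking the compatibility axioms by dualizing those of $A$; the paper itself gives no written proof beyond the phrase ``combining previous observations.'' Your construction of the cofinite ideal $J=\{x\in A:\Delta(x)\in I_f\otimes A+A\otimes I_g\}$ supplies the one genuinely nontrivial point the paper leaves implicit, the closure of $A^\circ$ under the convolution product, and it works as stated since $\Delta(xy)=\Delta(x)\Delta(y)$ and $\Delta\circ\alpha=\alpha^{\otimes 2}\circ\Delta$ make $J$ an $\alpha$-stable two-sided ideal with $A/J$ embedding in $A/I_f\otimes A/I_g$.
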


Given a Hom-bialgebra $\left( A,\mu ,\alpha,\eta ,\Delta,\beta ,\epsilon
 \right)$, it is  shown in \cite{HomHopf,HomAlgHomCoalg} that the vector space $Hom \left( A,A \right)$ with the multiplication given
by the convolution product carries a structure of Hom-associative algebra.

\begin{proposition}
Let  $\left( A,\mu ,\alpha,\eta ,\Delta,\beta ,\epsilon
 \right) $
be a Hom-bialgebra. Then  $Hom \left( A,A \right)$ with
the multiplication given by the convolution product defined by
$$ f \star g=\mu \circ \left( f\otimes g \right) \circ\Delta $$
and the unit being $\eta \circ \epsilon
$,  is a unital Hom-associative
algebra with the homomorphism map defined by $\gamma (f)=\alpha
\circ f \circ \beta$.
\end{proposition}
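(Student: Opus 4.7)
My plan is to verify the two defining axioms of a unital Hom-associative algebra for $(\Hom(A,A), \star, \eta\circ\epsilon, \gamma)$: Hom-associativity of $\star$ with respect to the twist $\gamma$, and the unit property that $\eta\circ\epsilon$ acts as a unit relative to $\gamma$. Both will follow by direct computation from the axioms of the Hom-bialgebra $A$, with no clever construction required.

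For Hom-associativity, I would take $f,g,h \in \Hom(A,A)$ and expand both sides of
\[
(f\star g)\star \gamma(h) \;=\; \gamma(f)\star (g\star h)
\]
using only the definitions. Unwinding the left-hand side gives
\[
(f\star g)\star \gamma(h) \;=\; \mu\circ(\mu\otimes\alpha)\circ(f\otimes g\otimes h)\circ(\Delta\otimes\beta)\circ\Delta,
\]
while the right-hand side unwinds to
\[
\gamma(f)\star(g\star h) \;=\; \mu\circ(\alpha\otimes\mu)\circ(f\otimes g\otimes h)\circ(\beta\otimes\Delta)\circ\Delta.
\]
At this point the Hom-associativity of $\mu$ replaces $\mu\circ(\mu\otimes\alpha)$ with $\mu\circ(\alpha\otimes\mu)$ on the outside, and the Hom-coassociativity of $\Delta$ (in the form $(\Delta\otimes\beta)\circ\Delta = (\beta\otimes\Delta)\circ\Delta$) handles the inside. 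The crucial conceptual point, which is the heart of the statement, is that the definition $\gamma(f)=\alpha\circ f\circ\beta$ is calibrated precisely so that the algebra twist $\alpha$ meets $\mu$ and the coalgebra twist $\beta$ meets $\Delta$ in matching positions of the tensor product.

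For the unit property, I would show $f\star(\eta\circ\epsilon) = \gamma(f) = (\eta\circ\epsilon)\star f$. Applying the first expression to $x\in A$ and using Sweedler notation gives
\[
(f\star(\eta\circ\epsilon))(x) \;=\; \sum_{(x)} \mu\bigl(f(x_1)\otimes \epsilon(x_2)\,1_A\bigr) \;=\; \mu\bigl(f(\tfrac{\ }{\ })\otimes 1_A\bigr)\circ(id\otimes\epsilon)\circ\Delta(x).
\]
The counit axiom $(id\otimes\epsilon)\circ\Delta = \beta$ collapses this to $\mu\circ(f\otimes 1_A)\circ\beta(x)$, and then the unit axiom $\mu\circ(id\otimes\eta) = \alpha$ yields $\alpha\circ f\circ\beta(x) = \gamma(f)(x)$. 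The symmetric computation with the other counit axiom handles $(\eta\circ\epsilon)\star f$.

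I do not expect any real obstacle: the entire argument is bookkeeping about where the twisting maps sit. The only subtle point is that one must use the coalgebra twist $\beta$ in the coassociativity/counit identities and the algebra twist $\alpha$ in the associativity/unit identities, and the definition of $\gamma$ weaves them together correctly. I would therefore present the proof as two short displayed computations, one for each axiom, making explicit which Hom-bialgebra axiom is invoked at each step.
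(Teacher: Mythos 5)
Your proof is correct, and it is the standard direct verification: the paper itself does not prove this proposition (it defers to the cited references \cite{HomHopf,HomAlgHomCoalg}), and your two computations --- matching $\mu\circ(\mu\otimes\alpha)=\mu\circ(\alpha\otimes\mu)$ against $(\Delta\otimes\beta)\circ\Delta=(\beta\otimes\Delta)\circ\Delta$ for Hom-associativity, and using the counit and unit axioms to get $f\star(\eta\circ\epsilon)=(\eta\circ\epsilon)\star f=\alpha\circ f\circ\beta=\gamma(f)$ --- are exactly what that verification requires. Your observation that $\gamma(f)=\alpha\circ f\circ\beta$ is calibrated so that $\alpha$ pairs with $\mu$ and $\beta$ with $\Delta$ is the right way to see why the statement holds.
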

Therefore, we have the following definitions:
\begin{enumerate}
\item An endomorphism $S$ of $A$ is said to be
an \emph{antipode} if it is  the inverse of the identity over $A$ for the Hom-associative algebra $Hom \left(A,A \right)$ with the multiplication given by the convolution product defined by
\[
f \star g= \mu \circ \left( f \otimes g \right) \Delta
\]
and the unit being $\eta \circ \epsilon
$.
\item A \emph{Hom-Hopf algebra} is a Hom-bialgebra with an antipode.
\end{enumerate}

\begin{remark}
We have the following properties :
\begin{itemize}
\item The antipode $S$   is unique,
\item   $S(1_A)=1_A $,
\item   $\epsilon
 \circ S=\epsilon
$.
\item  Let $x$ be a primitive element ($\Delta (x)=1_A\otimes x + x\otimes 1_A$), then
$\epsilon
(x)=0$.
\item  If $x$ and $y$ are two primitive elements in $A$. Then we have $\epsilon
 (x)=0$ and  the commutator
 $[x,y]=\mu (x\otimes
y) -\mu (y \otimes x)$ is also a primitive element.
\item The set of all primitive elements of $A$,
denoted by $Prim(A)$, has a structure of Hom-Lie algebra.
\end{itemize}
\end{remark}

\begin{example}
Let   $\K G$ be the group-algebra over the group $G$. As a vector
space, $\K G$ is generated by   $\{e_g : g \in G\}$. If
$\alpha:G\rightarrow G$ is a group homomorphism, then it can be
extended to an algebra endomorphism of  $\K G$ by setting
$$\alpha (\sum_{g\in G}{a_g e_g})=\sum_{g\in G}{a_g \alpha (e_g)}=\sum_{g\in G}{a_g e_{\alpha (g)}}.
$$
Consider the usual bialgebra structure on $\K G$ and $\alpha$  a
 bialgebra morphism.  Then,   we define over $\K G$ a
Hom-bialgebra $(\K G, \mu, \eta,\Delta,\epsilon, \alpha)$  by
setting:
$$\mu(e_g\otimes e_{g'})=\alpha (e_{g\cdot g'}),
$$
$$
\Delta \left( e_{g}\right) =\alpha(e_{g})\otimes \alpha(e_{g}).
$$
\end{example}

\begin{example}
We provide a one-parameter family of  twistings ($\lambda\in \K $), which deforms Sweedler Hopf algebra to Hom-bialgebras. \\
We consider the basis $ \{e_1 =1,\ e_2 =c,\ e_3=x,\ e_4=cx \} $.

 The multiplication, with respect to the basis, is 
\begin{eqnarray*}
&& \mu(e_1, e_1)=e_1,\  \mu(e_1, e_2)=e_2, \ \mu(e_1, e_3)=\lambda e_3,\ \mu(e_1, e_4)=\lambda e_4, \\
&& \mu(e_2, e_1)=e_2,\  \mu(e_2, e_2)=e_1, \ \mu(e_2, e_3)=\lambda e_4,\ \mu(e_2, e_4)=\lambda e_3, \\
&& \mu(e_3, e_1)=\lambda e_3,\  \mu(e_3, e_2)=-\lambda e_4, \ \mu(e_4, e_1)=\lambda e_4,\ \mu(e_4, e_2)=-\lambda e_3. 
\end{eqnarray*} 

The comultiplication is defined by
\begin{eqnarray*}
&& \Delta (e_1 )=e_1\otimes e_1,\ \Delta (e_2 )=e_2\otimes e_2,\\
&& \Delta (e_3 )=\lambda ( e_2\otimes e_3+e_3\otimes e_1),\  \Delta (e_4 )=\lambda (e_1\otimes e_4+e_4\otimes e_2).
\end{eqnarray*} 

The counit is given by $\epsilon
 (e_1)=\epsilon
 (e_2)=1,\  \epsilon
 (e_3)=\epsilon
 (e_4)=0.$ 

The twist map is defined by  $\alpha (e_1) =e_1,\  \alpha (e_2) =e_2, \  \alpha (e_3) =\lambda e_3,\ \alpha (e_4) =\lambda e_4.$
 \end{example}

\begin{example} Consider the  polynomial algebra  $A=\K [(X_{ij})]$ in variables $(X_{i j})_{i,j=1,\cdots,n}$. It carries a  structure of bialgebra with the comultiplication  defined by
$ \delta (X_{i j})=\sum_{k=1}^{n}{X_{i k}\otimes X_{k j}}$ and
$\delta (1)=1\otimes 1$. Let $\alpha$ be a bialgebra
morphism, it is defined by $n^2$ polynomials $\alpha (X_{i j})$. We
define a  Hom-bialgebra $(A, \mu, \alpha,\Delta,\alpha)$
by
\begin{align*}
\mu(f\otimes g)&=f(\alpha (X_{1 1}),\cdots,\alpha (X_{n n}))g(\alpha (X_{1 1}),\cdots,\alpha (X_{n n})),\\
 \Delta (X_{i j})&=\sum_{k=1}^{n}{\alpha (X_{i k})\otimes \alpha (X_{k j})},\\
 \Delta (1)&=\alpha (1)\otimes \alpha (1).
\end{align*}
\end{example}

\begin{example}[Universal enveloping Hom-algebra]
A Hom-Lie algebra is a Hom-algebra $(A,[~,~],\alpha)$ satisfying a twisted Jacobi condition $\circlearrowleft_{x,y,z}{[\alpha(x),[y,z]]},$ where the bracket denotes the product and  $\circlearrowleft_{x,y,z}$ denotes the cyclic sum  on $x,y,z$.

Given a multiplicative Hom-associative algebra $A = (A,\mu,\alpha)$, one can associate to it a multiplicative Hom-Lie algebra $HLie(A) = (A,[~,~],\alpha)$ with the same underlying module $(A,\alpha)$ and the bracket $[~,~] = \mu - \mu^{op}$. 
This construction gives a functor $HLie$ from multiplicative Hom-associative algebras to multiplicative Hom-Lie algebras \cite{MakSil-struct}. In \cite{Yau:EnvLieAlg}, Yau constructed the left adjoint $U_{HLie}$ of $HLie$. He also made some minor modifications in \cite{Yau:comodule} to take into account the unital case.

The functor $U_{HLie}$ is defined as
\begin{equation}
U_{HLie}(A) = F_{HNAs}(A)/I^\infty \qquad \text{with} \quad F_{HNAs}(A) = \oplus_{n \geqslant 1} \oplus_{\tau \in T_n^{wt}} A_{\tau}^{\otimes n}
\end{equation}
for a multiplicative Hom-Lie algebra $(A,[~,~],\alpha)$. Here $T_n^{wt}$ is the set of weighted $n$-trees encoding the multiplication of elements (by trees) and twisting by $\alpha$ (by weights), $A_{\tau}^{\otimes n}$ is a copy of $A^{\otimes n}$ and $I^\infty$ is a certain submodule of relations build in such a way that the quotient is Hom-associative.

Moreover, the comultiplication $\Delta : U_{HLie}(A) \to U_{HLie}(A)~\otimes~U_{HLie}(A)$ defined by $\Delta(x) = \alpha(x) \otimes 1 + 1 \otimes \alpha(x)$ equips the multiplicative Hom-associative algebra $U_{HLie}(A)$ with a structure of Hom-bialgebra.
\end{example}

\section{Hom-quasi-bialgebras (HQ-bialgebras)}\label{Quasi}
The aim of this section is to extend the concept of quasi-bialgebra  to Hom-setting. This generalized structure  is called Hom-quasi-bialgebra  which we write for  shortness HQ-bialgebra.
  Moreover, we introduce  quasi-triangular HQ-bialgebra. We provide a construction deforming a HQ-bialgebra with HQ-bialgebra morphism to a new HQ-bialgebra.

 \begin{definition}  \label{Quasibi} A {\rm Hom-quasi-bialgebra} (HQ-bialgebra for short)  is a
 
 tuple $(A, \mu, \eta, \Delta, \epsilon
, \alpha, \Phi)$, where $\Phi$ is an invertible element in $A^{\otimes 3}$ satisfying $\alpha^{\otimes 3}(\Phi)=\Phi$, and  such that  
 
 \begin{enumerate}
\item  
the quadruple $(A, \mu,  \alpha, \eta)$ is a unital Hom-associative algebra,

\item
the map $\Delta: A\rightarrow A\otimes A$ is a Hom-algebra morphism, that is  $\; \Delta\circ \alpha=\alpha^{\otimes 2} \circ\Delta$ and $\Delta( xy)=\Delta(x)\Delta(y)$,

\item
the map $\epsilon :  \K \rightarrow A$ is a Hom-algebra morphism, that is  $\; \epsilon (\alpha(x))=\epsilon
(x)$ and $\epsilon ( xy)=\epsilon (x)\epsilon (y)$, 
\item and  the following four identities  are satisfied

\end{enumerate}
\begin{eqnarray}
 (\alpha \otimes \Delta)(\Delta(x))\Phi&=& \Phi(\Delta \otimes \alpha)(\Delta(x)) , \label{HQ1}\\
\label{HQ2}(\epsilon
 \otimes id)(\Delta(x))&=& \alpha(x) \;=\; (id \otimes \epsilon
)(\Delta(x)),\\
\label{HQ3}(\alpha \otimes \alpha \otimes \Delta)(\Phi)(\Delta \otimes \alpha \otimes \alpha)(\Phi)&=&\Phi_{(234)}(\alpha \otimes \Delta \otimes \alpha)(\Phi)\Phi_{(123)},\\
\label{HQ4}(id \otimes \epsilon
 \otimes id)(\Phi)&=& 1_A \otimes 1_A,
\end{eqnarray}
where   $\Phi_{(123)}=\Phi \otimes 1_A$  and  $\Phi_{(234)}=1_A \otimes \Phi$.

 \end{definition}
 \noindent
\begin{remark}\   
\begin{itemize}
\item  Equation \eqref{HQ1} of Definition  \ref{Quasibi}  is equivalent to 
 \begin{eqnarray*}
 R_{\Phi}\circ(\alpha \otimes id ) \circ (id \otimes \Delta) \circ \Delta
&=&    L_{\Phi} \circ     (id \otimes \alpha)\circ (\Delta \otimes id)\circ \Delta, 
\end{eqnarray*}
where $R_{\Phi}$ (resp. $L_{\Phi}$) is right (resp. left) multiplication by $\Phi$.
\item The parenthesis in the right hand side  of identity  \eqref{HQ3} are not needed since $\alpha^{\otimes 3} ( \Phi  )=   \Phi$.
\item The conditions \eqref{HQ3},\eqref{HQ4} may be interpreted as $\Phi$ has to be a $3$-cocycle.
\item If $A$ is a HQ-bialgebra then $A^{op},\ A^{cop}$ and $A^{op,cop}$ are also HQ-bialgebras. The structures are obtained by setting respectively  $\Phi_{op}=\Phi^{-1}$,  $\Phi_{cop}=\Phi^{-1}_{321}$ and $\Phi_{op,cop}=\Phi_{321}$. If $\Phi=\sum \Phi^{(1)} \otimes \Phi^{(2)}\otimes \Phi^{(3)}$, then  $\Phi_{321}=\sum \Phi^{(3)} \otimes \Phi^{(2)}\otimes \Phi^{(1)}$ and so on.

\end{itemize}
\end{remark} 
  \begin{definition}  
 
 Let $(A, \mu, \eta, \Delta, \epsilon
, \alpha, \Phi)$ and $(A', \mu', \eta', \Delta', \epsilon
', \alpha', \Phi')$ be HQ-bialgebras.   A morphism $f: A \rightarrow A'$ is called HQ-bialgebra morphism if the following conditions are satisfied
\begin{eqnarray}
&f\circ  \mu= \mu' \circ (f\otimes f), \quad &f ( 1_A)=  1_{A'} ,\\
&(f\otimes f)  \circ  \Delta= \Delta' \circ f, \quad 
&\epsilon
 ' \circ f= \epsilon
,\\
&f\circ \alpha=\alpha'\circ f, & f ^{\otimes 3}(\Phi) = \Phi' .     
\end{eqnarray} 
 
 \end{definition}
 
 \begin{remark}
The dual HQ-bialgebra could be defined in a natural way.  Similarly to the case of quasi-bialgebra \cite{Majid}, it turns out  that the dual HQ-bialgebra is Hom-associative only up to conjuagtion in a suitable convolution algebra by a $3$-cocycle $\Phi$. The axioms could be found by dualization.
 \end{remark}
 
In the following, we show how to construct HQ-bialgebras starting from a given HQ-bialgebra and a HQ-bialgebra morphism. In particular, a quasi-bialgebra and a quasi-bialgebra morphism lead to a HQ-bialgebra. This extends twisting principle to HQ-bialgebras.
\begin{theorem}\label{Thm:TwistHQbialgra}

Let $(A, \mu, \eta, \Delta, \epsilon
, \alpha, \Phi)$ be a HQ-bialgebra and $\beta : A \rightarrow A$ be a HQ-bialgebra morphism.\\ Then $(A, \mu_{\beta}=\beta\circ \mu, \eta, \Delta_{\beta}=\Delta\circ \beta, \epsilon, \beta\circ \alpha, \Phi)$ is a HQ-bialgebra.

In particular, if $(A, \mu, \eta, \Delta, \epsilon,  \Phi)$ is a quasi-bialgebra and $\beta : A \rightarrow A$ is a quasi-bialgebra morphism, 
 then $(A, \beta\circ \mu, \eta,\Delta\circ \beta, \epsilon, \beta, \Phi)$ is a HQ-bialgebra.
\end{theorem}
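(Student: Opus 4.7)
The overall plan is to verify each clause of Definition~\ref{Quasibi} for the twisted tuple $(A,\mu_\beta,\eta,\Delta_\beta,\epsilon,\beta\circ\alpha,\Phi)$ in turn. The particular statement about a genuine quasi-bialgebra falls out of the general one by specialising $\alpha=\id$ and observing that any quasi-bialgebra morphism then trivially satisfies $f\circ\alpha=\alpha\circ f$ and $f^{\otimes 3}(\Phi)=\Phi$, so it qualifies as a HQ-bialgebra morphism of the identity-twisted structure. The guiding principle throughout is that $\beta$ is a morphism for every piece of structure, so any $\beta$ introduced by twisting can be slid past $\Delta$, $\mu$, $\alpha$, $\epsilon$ or absorbed into the invariant $\Phi$.

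First I would dispose of the unital Hom-associative algebra structure $(A,\mu_\beta,\beta\circ\alpha,\eta)$: this is immediate from Theorem~\ref{twist-ass_Lie}, since a HQ-bialgebra morphism is in particular a weak Hom-algebra morphism. Next, for $\Delta_\beta=\Delta\circ\beta$ being a Hom-algebra morphism with twist $\beta\circ\alpha$, the compatibility $\Delta_\beta\circ(\beta\alpha)=(\beta\alpha)^{\otimes 2}\circ\Delta_\beta$ reduces to $\beta^{\otimes 2}\Delta=\Delta\beta$ combined with $\alpha^{\otimes 2}\Delta=\Delta\alpha$. Multiplicativity of $\Delta_\beta$ uses that the tensor-product multiplication satisfies $u\cdot_\beta v=\beta^{\otimes 2}(u\cdot v)$ (with $\cdot$ the original multiplication on $A\otimes A$), and reduces to the corresponding property of $\Delta$ together with $\beta$ being a $\mu$-morphism. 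The unit and counit conditions are equally mechanical, needing only $\beta(1_A)=1_A$ and $\epsilon\circ\beta=\epsilon$.

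The heart of the proof is the four axioms \eqref{HQ1}--\eqref{HQ4}. The invariance $(\beta\circ\alpha)^{\otimes 3}(\Phi)=\Phi$ follows by composing $\beta^{\otimes 3}(\Phi)=\Phi$ with $\alpha^{\otimes 3}(\Phi)=\Phi$, and \eqref{HQ4} is immediate since both $\Phi$ and $\epsilon$ are unchanged. For \eqref{HQ1} and \eqref{HQ3} the plan is to rewrite each side of the twisted identity as $\beta^{\otimes n}$ applied to the corresponding side of the original identity (with $n=3$ for \eqref{HQ1} and $n=4$ for \eqref{HQ3}), using two observations: first, every operator of the form $\Delta_\beta$, $\beta\alpha$, or a tensor composite thereof, equals $\beta^{\otimes n}$ post-composed with the untwisted operator evaluated at $\beta(x)$ or at $\Phi$; second, the multiplication on $A^{\otimes n}$ under the twist satisfies $u\cdot_\beta v=\beta^{\otimes n}(u\cdot v)$, so $\beta^{\otimes n}$ pulls all the way out of any product expression. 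Combined with $\beta^{\otimes n}(\Phi)=\Phi$, both sides of the twisted identity then collapse to $\beta^{\otimes n}$ applied to the original identity, which holds by hypothesis.

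The main obstacle I anticipate is bookkeeping in \eqref{HQ3}: since $(A^{\otimes 4},\cdot_\beta)$ is only Hom-associative, the parenthesisation in the triple product $\Phi_{(234)}(\alpha\otimes\Delta\otimes\alpha)(\Phi)\Phi_{(123)}$ matters in principle. However, the Remark following Definition~\ref{Quasibi} already points out that the $\alpha^{\otimes 3}$-invariance of $\Phi$ makes this ambiguity cosmetic in the original identity, and the same argument using $(\beta\alpha)^{\otimes 3}$-invariance resolves it in the twisted version. Once this bookkeeping is in place, the proof of the particular case is obtained simply by taking $\alpha=\id$.
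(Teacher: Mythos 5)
Your proposal is correct and follows essentially the same route as the paper: both sides of each twisted axiom are rewritten as a power of $\beta^{\otimes n}$ applied to the corresponding side of the original axiom, using that $\beta$ commutes with all the structure maps, that products twist by $u\cdot_\beta v=\beta^{\otimes n}(u\cdot v)$, and that $\beta^{\otimes n}(\Phi)=\Phi$, with the particular case obtained by taking $\alpha=\id$. Nothing essential is missing.
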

\begin{proof}  
The proof consists in checking   axioms of Definition \ref{Quasibi}.  Since $\beta^{\otimes 3}(\Phi)=\Phi$, $\beta^{\otimes 3}(\Phi^{-1})=\Phi^{-1}$ and $\beta \alpha=\alpha \beta$, the left hand side of the identity \eqref{HQ1} may be written 
\begin{eqnarray*}
((\beta\alpha \otimes \Delta_{\beta})(\Delta_\beta(x)))\Phi&=& ((\beta\alpha \otimes \Delta\beta )(\Delta \beta (x)))\Phi \\
&=& (\beta^{\otimes 3}(\alpha \otimes \Delta ) \beta ^{\otimes 2}\Delta(x)))\Phi\\
&=&\beta^{\otimes 3} \beta^{\otimes 3}(((\alpha \otimes \Delta ) \Delta(x))\Phi).
\end{eqnarray*}
On the other hand, the right hand side may be written as  
\begin{eqnarray*}
\Phi(( \Delta_{\beta} \otimes \beta\alpha )(\Delta _{\beta}(x)))&=&\Phi(( \Delta\beta \otimes \beta\alpha )(\Delta \beta(x)))\\
&=&\Phi(\beta^{\otimes 3}(\Delta\otimes \alpha) (\beta ^{\otimes 2}\Delta(x))) \\
&=&\beta^{\otimes 3}(\Phi (\Delta \otimes\alpha) (\beta ^{\otimes 2}\Delta(x))) \\
&=&\beta^{\otimes 3}(\Phi \beta^{\otimes 3}(\Delta \otimes\alpha ) (\Delta(x))) \\
&=&\beta^{\otimes 3}  \beta^{\otimes 3} ( \Phi (\Delta \otimes \alpha) (\Delta(x)) ).
\end{eqnarray*}
Since identity \eqref{HQ1}  is satisfied by  the initial HQ-bialgebra,  we are done.\\
Identity \eqref{HQ2}  is straightforward. For  identity \eqref{HQ3}, we have
\begin{eqnarray*}
LHS=(\beta\alpha \otimes \beta\alpha \otimes \Delta_{ \beta})(\Phi)(\Delta_{ \beta} \otimes\beta \alpha \otimes\beta \alpha)(\Phi)\\
= \beta^{\otimes 4}((\alpha \otimes\alpha \otimes \Delta)(\Phi)(\Delta \otimes\alpha\otimes \alpha)(\Phi)),
\end{eqnarray*}
and 
\begin{eqnarray*}
RHS=\Phi_{(234)}(\beta\alpha \otimes \Delta_{\beta} \otimes \beta\alpha)(\Phi)\Phi_{(123)}
= \beta^{\otimes 4} ((\Phi_{(234)})  \;  ((\alpha \otimes \Delta \otimes \alpha)(\Phi))) \Phi_{(123)} ).
\end{eqnarray*}
It follows $LHS=RHS$.  Identity \eqref{HQ4} is checked exactly the same way.
\end{proof}

\begin{corollary}
A multiplicative HQ-bialgebra  $(A, \mu, \eta, \Delta, \epsilon
, \alpha, \Phi)$,  yields infinitely many  HQ-bialgebras $(A,\alpha^n\circ \mu, \eta, \Delta\circ\alpha^n, \epsilon
, \alpha^{n+1}, \Phi)$.
\end{corollary}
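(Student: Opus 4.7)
The plan is to reduce the corollary to Theorem \ref{Thm:TwistHQbialgra} by taking the twisting morphism to be $\beta=\alpha^n$. What has to be checked is that, under the multiplicativity hypothesis, $\alpha^n:A\to A$ qualifies as a HQ-bialgebra self-morphism of the given HQ-bialgebra $(A,\mu,\eta,\Delta,\epsilon,\alpha,\Phi)$. Once this is done, applying the theorem with $\beta=\alpha^n$ produces the HQ-bialgebra $(A,\alpha^n\circ\mu,\eta,\Delta\circ\alpha^n,\epsilon,\alpha^n\circ\alpha,\Phi)$, and $\alpha^n\circ\alpha=\alpha^{n+1}$ gives exactly the stated structure. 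Letting $n$ range over $\N$ yields the infinite family.

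The verification of the morphism axioms proceeds by an easy induction on $n$. The base case $n=0$ is trivial since $\alpha^0=\id$, and the step from $n$ to $n+1$ uses the following ingredients in turn: multiplicativity for $\mu$, namely $\alpha\circ\mu=\mu\circ\alpha^{\otimes 2}$, gives $\alpha^{n+1}\circ\mu=\alpha\circ(\mu\circ(\alpha^n)^{\otimes 2})=\mu\circ(\alpha^{n+1})^{\otimes 2}$; multiplicativity for $\Delta$, namely $\alpha^{\otimes 2}\circ\Delta=\Delta\circ\alpha$, similarly gives $(\alpha^{n+1})^{\otimes 2}\circ\Delta=\Delta\circ\alpha^{n+1}$; the HQ-bialgebra condition $\alpha^{\otimes 3}(\Phi)=\Phi$ iterates to $(\alpha^n)^{\otimes 3}(\Phi)=\Phi$; compatibility of $\epsilon$ with $\alpha$ (from the fact that $\epsilon$ is a Hom-algebra morphism in the HQ-bialgebra axioms, i.e.\ $\epsilon\circ\alpha=\epsilon$) iterates to $\epsilon\circ\alpha^n=\epsilon$; and $\alpha(1_A)=1_A$ (which follows from the unit axiom together with $\alpha$ being multiplicative) iterates to $\alpha^n(1_A)=1_A$. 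Finally $\alpha^n\circ\alpha=\alpha\circ\alpha^n$ is immediate.

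With all six morphism conditions in hand, Theorem \ref{Thm:TwistHQbialgra} applies verbatim, which proves the corollary. There is no real obstacle: the corollary is essentially a packaged application of the twisting principle, and the only point that requires any thought is noting that multiplicativity propagates to all iterates of $\alpha$, so that the single hypothesis ``multiplicative'' suffices to promote $\alpha^n$ to a self-morphism for every $n\geq 0$.
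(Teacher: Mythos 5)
Your proof is correct and is exactly the argument the paper intends: the corollary is the twisting theorem (Theorem \ref{Thm:TwistHQbialgra}) applied with $\beta=\alpha^n$, after observing that multiplicativity promotes every iterate $\alpha^n$ to a HQ-bialgebra self-morphism. One small caveat: your parenthetical claim that $\alpha(1_A)=1_A$ follows from the unit axiom plus multiplicativity is not justified (the unit axiom only gives $1_A\cdot 1_A=\alpha(1_A)$, and the authors themselves impose $\alpha(1_A)=1_A$ as an extra hypothesis in Section \ref{DrinfeldTwist}), but this condition is not actually used in the proof of the theorem, so the conclusion stands.
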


\begin{example}[Twisted quantum double of $H(2)$]
Let $H(2)$ be the $2$-dimensional bialgebra $\K C_2$, the group algebra associated to cyclic group of order $2$. The quantum double of $H(2)$ is the unital algebra generated by $X$ and $Y$ with relations $X^2=1,\ Y^2=X$ and $XY=YX$ (we can view it as the algebra generated by $Y$ such that $Y^4=1$). The comultiplication of $D(H(2))$ is given by the formulas
$\Delta(X)=X\otimes X$ and $\Delta(Y)=-\frac{1}{2}(Y\otimes Y+XY\otimes Y+Y\otimes XY-XY\otimes XY).$ The counit is defined as $\epsilon(X)=1,\ \epsilon(Y)=-1$. It carries a structure of quasi-bialgebra with $\Phi=1-2P\otimes P\otimes P,$ where $P=\frac{1}{2}(1-X)$, (see for example \cite[Proposition 3.10]{BulacuCaneTorr}).

We assume in this example that $\K=\mathbb{C}$. The quasi-algebra morphisms of $D(H(2))$ are defined as 
\begin{enumerate}
\item $\alpha_1(X)=X,\ \alpha_1(Y)=XY,$
\item $\alpha_2(X)=X,\ \alpha_2(Y)=\xi 1+\bar{\xi}X,$ where $\xi=e^{\frac{3\pi}{4}\imath}$ and $\bar{\xi}$ the conjugate of $\xi$.
\item $\alpha_3(X)=X,\ \alpha_3(Y)= \bar{\xi} 1+\xi X$.
\end{enumerate}
Theorem \ref{Thm:TwistHQbialgra} leads to the following two examples, corresponding to $\alpha_1$ and $\alpha_2$, of HQ-bilagebras:

Let  $\{e_1,e_2,e_3,e_4\} $ be the basis of the $4$-dimensional vector space and $e_1$ being the unit. The counit for both  is defined as $\epsilon(e_1)=\epsilon(e_2)=1$ and $\epsilon(e_3)=\epsilon(e_4)=-1$ and $\Phi=1-2P\otimes P\otimes P$ where $P=\frac{1}{2}(e_1-e_2)$. The multiplications and comultiplications are defined as follows
\begin{itemize}
\item 
For the map $\alpha_1$ defined by
$$\alpha_1(e_1)=e_1,\ \alpha_1(e_2)=e_2,\ \alpha_1(e_3)=e_4, \alpha_1(e_4)=e_3,$$
\noindent the multiplication   is
\begin{eqnarray*}
e_1\cdot_1 e_1=e_1,\ e_1\cdot_1 e_2=e_2,\ e_1\cdot_1 e_3=e_4,\  e_1\cdot_1 e_4=e_3,\\
e_2\cdot_1 e_1=e_2,\ e_2\cdot_1 e_2=e_1,\ e_2\cdot_1 e_3=e_3,\  e_2\cdot_1 e_4=e_4,\\
e_3\cdot_1 e_1=e_4,\ e_3\cdot_1 e_2=e_3,\ e_3\cdot_1 e_3=e_2,\  e_3\cdot_1 e_4=e_1,\\
e_4\cdot_1 e_1=e_3,\ e_4\cdot_1 e_2=e_4,\ e_4\cdot_1 e_3=e_1,\  e_4\cdot_1 e_4=e_2,
\end{eqnarray*}
\noindent and the comultiplication is 
\begin{eqnarray*}
&& \Delta(e_1)=e_1\otimes e_1,  \Delta(e_2)=e_2\otimes e_2,\\
 && \Delta(e_3)=-\frac{1}{2}(e_4\otimes e_4+e_3\otimes e_4+e_4\otimes e_3-e_3\otimes e_3),\\
 && \Delta(e_4)=-\frac{1}{2}(e_3\otimes e_3+e_4\otimes e_3+e_3\otimes e_4-e_4\otimes e_4).
\end{eqnarray*}
\item For the  map $\alpha_2$ defined by
$$\alpha_2(e_1)=e_1,\ \alpha_2(e_2)=e_2,\ \alpha_2(e_3)=\xi e_1+\bar{\xi}e_2, \alpha_2(e_4)= \bar{\xi}e_1+\xi e_2,$$
\noindent the multiplication  is
\begin{eqnarray*}
e_1 \cdot_2 e_1=e_1,\ e_1 \cdot_2 e_2=e_2,\ e_1 \cdot_2 e_3=\xi e_1+\bar{\xi}e_2,\  e_1 \cdot_2 e_4=\bar{\xi}e_1+\xi e_2,\\
e_2 \cdot_2 e_1=e_2,\ e_2 \cdot_2 e_2=e_1,\ e_2 \cdot_2 e_3=\bar{\xi}e_1+\xi e_2,\  e_2 \cdot_2 e_4=\xi e_1+\bar{\xi}e_2,\\
e_3 \cdot_2 e_1=\xi e_1+\bar{\xi}e_2, \ e_3 \cdot_2 e_2=\bar{\xi}e_1+\xi e_2,\ e_3 \cdot_2 e_3=e_2,\  e_3 \cdot_2 e_4=e_1,\\
e_4 \cdot_2 e_1=\bar{\xi}e_1+\xi e_2,\ e_4 \cdot_2 e_2=\xi e_1+\bar{\xi}e_2,\ e_4 \cdot_2 e_3=e_1,\  e_4 \cdot_2 e_4=e_2,
\end{eqnarray*}
\noindent and the comultiplication is 
\begin{eqnarray*}
&& \Delta(e_1)=e_1\otimes e_1,  \Delta(e_2)=e_2\otimes e_2,\\
&&  \Delta(e_3)=-e_1\otimes e_1+e_1\otimes e_2+e_2\otimes e_1-e_2\otimes e_2,\\
&& \Delta(e_4)=-e_1\otimes e_1+e_1\otimes e_2+e_2\otimes e_1-e_2\otimes e_2.\\
\end{eqnarray*}
\end{itemize}
\end{example}

In the following we   define quasi-triangular HQ-bialgebras and provide a construction using a twisting principle, that is we obtain a new quasi-triangular HQ-bialgebra from a given quasi-triangular HQ-bialgebra and a morphism. Quasi-triangular bialgebra was introduced in \cite{DRIN2} and extended to Hom-bialgebra in \cite{Yau:YangBaxter,Yau:HQuantumGroup1}.

\begin{definition}   

A {\rm quasi-triangular HQ-bialgebra}  is a tuple $(A, \mu, \eta,\Delta, \epsilon
,  \alpha, \Phi, R)$ in which  $(A, \mu, \eta, \Delta, \epsilon
,   \alpha, \Phi)$  is a HQ-bialgebra   and $R$ is an invertible element in  $A \otimes A$,  satisfying  $\alpha^{\otimes 2} (R)=R$, such that  for all $x \in A$,
\begin{eqnarray}
\label{QT1} \Delta^{op}(x)&=& R\; \Delta(x) R^{-1},\\
\label{QT2} (\alpha \otimes \Delta)(R)&=&\Phi_{231}^{-1} R_{13}\Phi_{213}R_{12} \Phi^{-1},\\
\label{QT3} (\Delta \otimes \alpha)(R)&=&\Phi_{312}R_{13}\Phi_{123}^{-1}R_{23} \Phi,
\end{eqnarray}
where $R_{13}=\tau_{23} (R \otimes 1), R_{23}=1 \otimes R$ and $R_{12}=R \otimes 1$. 
 \end{definition}
 Notice that we do not need to fix the bracket in the right hand sides of identities \eqref{QT1},\eqref{QT2},\eqref{QT3}  since $\alpha^{\otimes 3} (\Phi)=\Phi$ and $\alpha^{\otimes 2} (R)=R$.
 
 A quasi-triangular QH-bialgebra morphism is a QH-bialgebra morphism which conserve the element $R$. 
 
Likewise a  quasi-triangular HQ-bialgebra and a quasi-triangular HQ-bialgebra morphism give rise to a new quasi-triangular HQ-bialgebra.
 
 \begin{theorem}
 Let $(A, \mu, \eta,\Delta, \epsilon
,  \alpha, \Phi, R)$ be a quasi-triangular HQ-bialgebra and $\beta: A \rightarrow A$ be a  quasi-triangular HQ-bialgebra morphism.\\
 Then $(A, \mu_\beta=\beta\circ \mu, \eta,\Delta_\beta=\Delta\circ\beta, \epsilon
, \beta\circ \alpha, \Phi, R)$  is a quasi-triangular HQ-bialgebra.
 \end{theorem}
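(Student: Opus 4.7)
The plan is to leverage Theorem \ref{Thm:TwistHQbialgra}, which already establishes that $(A,\mu_\beta,\eta,\Delta_\beta,\epsilon,\beta\circ\alpha,\Phi)$ is an HQ-bialgebra, and then verify only the three quasi-triangularity axioms \eqref{QT1}, \eqref{QT2}, \eqref{QT3} together with the invariance $(\beta\alpha)^{\otimes 2}(R)=R$ for the twisted structure. The strategy mirrors the proof of Theorem \ref{Thm:TwistHQbialgra}: in each identity, push $\beta^{\otimes n}$ to the outside of the expression using that $\beta$ commutes with $\alpha$, that $\Delta\circ\beta=\beta^{\otimes 2}\circ\Delta$, and that $\beta$ fixes both $\Phi$ and $R$.

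First I would dispose of the easy items. The invariance follows from $(\beta\alpha)^{\otimes 2}(R)=\beta^{\otimes 2}(\alpha^{\otimes 2}(R))=\beta^{\otimes 2}(R)=R$. For identity \eqref{QT1}, since $\beta$ is a coalgebra morphism, $\Delta_\beta^{op}=\tau_{12}\circ\Delta\circ\beta=\Delta^{op}\circ\beta$, so applying the original \eqref{QT1} at $\beta(x)$ yields
\begin{equation*}
\Delta_\beta^{op}(x)=\Delta^{op}(\beta(x))=R\,\Delta(\beta(x))\,R^{-1}=R\,\Delta_\beta(x)\,R^{-1}.
\end{equation*}

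The main computation is for identity \eqref{QT2}, with \eqref{QT3} following by a symmetric argument. On the left-hand side, using $\beta\alpha=\alpha\beta$ and $\Delta\circ\beta=\beta^{\otimes 2}\circ\Delta$, one has
\begin{equation*}
(\beta\alpha\otimes\Delta_\beta)(R)=(\alpha\beta\otimes\beta^{\otimes 2}\Delta)(R)=\beta^{\otimes 3}\bigl((\alpha\otimes\Delta)(R)\bigr).
\end{equation*}
On the right-hand side, since $\beta$ is a Hom-algebra morphism, $\beta^{\otimes 3}$ is multiplicative for the componentwise product on $A^{\otimes 3}$; moreover it fixes $\Phi^{\pm 1}$ and $R$, hence fixes the embedded/permuted copies $\Phi_{231}^{-1}$, $\Phi_{213}$, $\Phi^{-1}$, $R_{13}$, $R_{12}$, so
\begin{equation*}
\Phi_{231}^{-1}R_{13}\Phi_{213}R_{12}\Phi^{-1}=\beta^{\otimes 3}\bigl(\Phi_{231}^{-1}R_{13}\Phi_{213}R_{12}\Phi^{-1}\bigr).
\end{equation*}
Applying $\beta^{\otimes 3}$ to the original \eqref{QT2} then equates the twisted left- and right-hand sides; identity \eqref{QT3} is handled identically with the roles of the first and third tensor factors interchanged.

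I do not anticipate any genuine obstacle. The only delicate point is the bookkeeping when factoring $\beta^{\otimes 3}$ out of products involving $R_{ij}$ and various placements $\Phi_{ijk}^{\pm 1}$; this step relies crucially on $\beta$ being a genuine (not merely weak) Hom-algebra morphism, so that $\beta^{\otimes 3}$ respects the componentwise multiplication on $A^{\otimes 3}$ and commutes with the tensor transpositions.
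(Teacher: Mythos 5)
Your proposal is correct and follows essentially the same route as the paper: invoke Theorem \ref{Thm:TwistHQbialgra} for the underlying HQ-bialgebra structure, then observe that both sides of each quasi-triangularity identity are obtained by applying $\beta^{\otimes n}$ to the corresponding sides of the original identity, using that $\beta$ commutes with $\alpha$, intertwines $\Delta$ with $\beta^{\otimes 2}\circ\Delta$, and fixes $\Phi$ and $R$ (hence their inverses and permuted placements). The paper's proof records exactly the two key displayed identities you give for \eqref{QT2}, so your write-up is just a more detailed version of the same argument.
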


\begin{proof}  
It follows from Theorem \ref{Thm:TwistHQbialgra} that $(A, \mu_\beta, \eta,\Delta_\beta, \epsilon
,  \beta\circ \alpha, \Phi)$ is a HQ-bialgebra.  The proof for the   identities \eqref{QT1},\eqref{QT2},\eqref{QT3}  is obvious. Indeed for example 
$$( \beta\alpha \otimes \Delta_\beta)(R)=\beta ^{\otimes 3} (\alpha \otimes \Delta)(R),$$
and 
$$\Phi_{231}^{-1} R_{13}\Phi_{213}R_{12} \Phi_{123}^{-1}=\beta ^{\otimes 3}(\Phi_{231}^{-1} R_{13}\Phi_{213}R_{12} \Phi_{123}^{-1}).
$$
\end{proof}
In particular,  a quasi-triangular quasi-bialgebra and a morphism give rise to  a quasi-triangular HQ-bialgebra.\\

\section{Gauge Transformation and Drinfeld's Twist Construction for HQ-bialgebras}\label{DrinfeldTwist}
The aim of this section is to extend the Drinfeld's twist construction to HQ-bialgebras. We prove the main theorem stating that gauge transformations give rise to new HQ-bilagebra structure.

In this section we assume that HQ-bialgebras $(A, \mu, \eta, \Delta, \epsilon
, \alpha, \Phi)$ are multiplicative and  $\alpha (1_A)=1_A$. One can easily see that for all $x\in A$, we have $\alpha(x) x=x\alpha(x).$
  \begin{definition}  
 
 Let $(A, \mu, \eta, \Delta, \epsilon
, \alpha, \Phi)$ be a HQ-bialgebra.  A gauge transformation on $A$ is an invertible element $F$ of $A^{\otimes 2}$ such that $$\alpha^{\otimes 2}(F)=F\; \mbox{ and}\; \; (\epsilon
 \otimes id)(F)=(id \otimes \epsilon
)(F)=1_A .$$
 \end{definition}
 \noindent We set
 $$\Delta_F(x)=(F  \Delta(x))  F^{-1}, \; \forall x \in A$$ and $$\Phi_F=F_{23}( (\alpha \otimes \Delta)(F) (\Phi ((\Delta \otimes \alpha)(F^{-1})F_{12}^{-1}))),$$
  where $F_{23}=1_A \otimes F$ and $F_{12}=F \otimes 1_A$.
\\  
  Observe that $\Delta_F(x)=(F  \Delta(x))  F^{-1}=F(  \Delta(x)  F^{-1}).$

\begin{lemma}\label{lem4.2}
The  following identities hold for all $x,y\in A$
\begin{eqnarray} (\alpha^3 \otimes \Delta_F) (x \otimes y)=  F_{23}( (\alpha \otimes \Delta)(x \otimes y)) F_{23}^{-1}, \\
 ( \Delta_F \otimes \alpha^3) (x \otimes y)=  F_{12}(  \Delta  \otimes \alpha)(x \otimes y)) F_{12}^{-1}.
 \end{eqnarray}

\end{lemma}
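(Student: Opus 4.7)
The plan is to expand the right-hand sides directly in the tensor-product Hom-associative algebra $A^{\otimes 3}$, equipped with componentwise product and twisting map $\alpha^{\otimes 3}$. For the first identity I would write
\[
F_{23}\bigl((\alpha\otimes\Delta)(x\otimes y)\bigr)F_{23}^{-1} = (1_A\otimes F)\bigl(\alpha(x)\otimes \Delta(y)\bigr)(1_A\otimes F^{-1})
\]
and compute slot by slot. In the first tensor factor, the unit axiom $1_A\cdot z = z\cdot 1_A = \alpha(z)$ is applied twice, turning $\alpha(x)$ first into $\alpha^2(x)$ and then into $\alpha^3(x)$. In the second and third factors the two multiplications produce $F\,\Delta(y)\,F^{-1}$, which is $\Delta_F(y)$ by definition. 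Combining gives $\alpha^3(x)\otimes\Delta_F(y) = (\alpha^3\otimes\Delta_F)(x\otimes y)$. The second identity follows by the mirror computation with $F_{12}=F\otimes 1_A$ acting in the first two slots and $1_A$ acting in the third.

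The main technical point is to justify freely dropping parentheses in the triple product, since $A^{\otimes 3}$ is only Hom-associative. For this I would invoke the hypotheses $\alpha^{\otimes 2}(F)=F$ and $\alpha(1_A)=1_A$. The first also implies $\alpha^{\otimes 2}(F^{-1})=F^{-1}$, by applying the multiplicative map $\alpha^{\otimes 2}$ to $FF^{-1}=1_A\otimes 1_A$ and using uniqueness of the inverse. Consequently both $F_{23}$ and $F_{23}^{-1}$ are fixed points of $\alpha^{\otimes 3}$, so the Hom-associativity identity $(AB)\,\alpha^{\otimes 3}(C)=\alpha^{\otimes 3}(A)\,(BC)$ reduces, with $A=F_{23}$ and $C=F_{23}^{-1}$, to $(F_{23}X)F_{23}^{-1}=F_{23}(XF_{23}^{-1})$. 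The bracketing is therefore immaterial, exactly as already noted in the remark that $\Delta_F(x)=(F\Delta(x))F^{-1}=F(\Delta(x)F^{-1})$.

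Since the proof is essentially bookkeeping, I anticipate no serious obstacle. The only subtlety worth flagging is keeping accurate track of the exponent of $\alpha$: the two outer factors $1_A\otimes F$ and $1_A\otimes F^{-1}$ each contribute one application of $\alpha$ to the first slot through the unit axiom, so $\alpha(x)$ becomes $\alpha^3(x)$ rather than $\alpha(x)$. This is precisely what explains the $\alpha^3$ appearing in the statement, and it is the single place where the standing assumption $\alpha(1_A)=1_A$ is genuinely used.
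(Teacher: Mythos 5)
Your proposal is correct and takes essentially the same route as the paper, which likewise expands the product in $A^{\otimes 3}$ slot by slot, applies the unit axiom twice in the first tensor factor to turn $\alpha(x)$ into $\alpha^3(x)$, and recognizes $(F\Delta(y))F^{-1}=\Delta_F(y)$ in the remaining factors. (One small correction of attribution: the unit axiom $1_A\cdot z=z\cdot 1_A=\alpha(z)$ does not itself use $\alpha(1_A)=1_A$; that hypothesis is what you actually need to make $F_{23}$ and $F_{23}^{-1}$ fixed points of $\alpha^{\otimes 3}$ in your rebracketing step.)
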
\label{lem4.3}
\begin{proof}  
We have 
{\small{
\begin{eqnarray*}
 (\alpha^3 \otimes \Delta_F) (x \otimes y)&=&\alpha^3( x) \otimes \Delta_F (y)=\alpha^3( x) \otimes ( (F \cdot \Delta(y)) \cdot F^{-1}) 
 \\ \  &=&( (1_A \otimes F) \cdot (\alpha(x) \otimes \Delta(y))) \cdot (1_A \otimes F^{-1})\\
 \ &=& F_{23}( (\alpha \otimes \Delta)(x \otimes y)) F_{23}^{-1}. 
 \end{eqnarray*}
}}
 Th proof for the second identity is  similar.
 \end{proof}

  \begin{lemma}\label{lem4.5}
  The maps $\alpha \otimes \Delta$ and  $ \Delta_F$ are algebra morphisms.
    \end{lemma}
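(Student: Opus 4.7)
The strategy is to verify, for each of $\alpha \otimes \Delta$ and $\Delta_F$, both conditions of a Hom-algebra morphism: preservation of multiplication and commutation with the relevant twists. Everything rests on the standing multiplicativity of $\alpha$, on the HQ-bialgebra axioms for $(A,\mu,\eta,\Delta,\epsilon,\alpha,\Phi)$, and on the defining invariance $\alpha^{\otimes 2}(F)=F$, together with its immediate consequence $\alpha^{\otimes 2}(F^{-1})=F^{-1}$ (obtained by applying the algebra morphism $\alpha^{\otimes 2}$ to $FF^{-1}=1_A\otimes 1_A$).

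For $\alpha \otimes \Delta : A^{\otimes 2} \to A^{\otimes 3}$, both conditions reduce to componentwise verifications on pure tensors. Multiplicativity splits into $\alpha(xy)=\alpha(x)\alpha(y)$ and $\Delta(xy)=\Delta(x)\Delta(y)$, both of which hold by hypothesis, while twist-compatibility between $\alpha^{\otimes 2}$ on the source and $\alpha^{\otimes 3}$ on the target reduces to the identity $\Delta\circ\alpha=\alpha^{\otimes 2}\circ\Delta$.

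For $\Delta_F$, twist-compatibility $\Delta_F\circ\alpha = \alpha^{\otimes 2}\circ\Delta_F$ is a one-line computation: expand $\alpha^{\otimes 2}(F\Delta(x)F^{-1})$ using multiplicativity of $\alpha^{\otimes 2}$ and the invariances of $F$ and $F^{-1}$, then compare with $F\Delta(\alpha(x))F^{-1} = F\alpha^{\otimes 2}(\Delta(x))F^{-1}$. The substantive content is the multiplicativity $\Delta_F(xy)=\Delta_F(x)\Delta_F(y)$. The plan is to compute
\[
\Delta_F(xy) = F\,\Delta(xy)\,F^{-1} = F\bigl(\Delta(x)\Delta(y)\bigr)F^{-1}
\]
on the one hand, using that $\Delta$ is a Hom-algebra morphism, and
\[
\Delta_F(x)\Delta_F(y) = (F\Delta(x)F^{-1})(F\Delta(y)F^{-1})
\]
on the other, and to show the two agree by inserting $F^{-1}F=1_A\otimes 1_A$ between the two conjugates on the right-hand side and repeatedly applying the Hom-associativity identity $(ab)\alpha^{\otimes 2}(c)=\alpha^{\otimes 2}(a)(bc)$ in the Hom-associative algebra $A^{\otimes 2}$.

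The main obstacle is precisely this parenthesis bookkeeping. In a strictly associative algebra the identity $(FaF^{-1})(FbF^{-1})=F(ab)F^{-1}$ is immediate from the cancellation $F^{-1}F=1$, but in the Hom-associative setting each reassociation \emph{a priori} introduces a factor of $\alpha^{\otimes 2}$. The $\alpha^{\otimes 2}$-invariance of $F$, of $F^{-1}$, and of $1_A\otimes 1_A$ is exactly what allows those spurious twists to be absorbed, so that the final equality emerges with the correct parenthesization.
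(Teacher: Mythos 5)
Your proposal follows essentially the same route as the paper: the first map is handled as a tensor product of algebra morphisms, and for $\Delta_F$ the paper likewise inserts $F^{-1}F=1_A\otimes 1_A$ between the two conjugates and reshuffles parentheses via Hom-associativity, using $\alpha^{\otimes 2}(F)=F$, $\alpha^{\otimes 2}(F^{-1})=F^{-1}$ and the multiplicativity of $\Delta$ to absorb the twists. The only detail worth making explicit when you carry out the bookkeeping is that in a Hom-algebra the unit satisfies $(1_A\otimes 1_A)\cdot a=\alpha^{\otimes 2}(a)$ rather than $a$, so the cancellation of $F^{-1}F$ produces an extra $\alpha^{\otimes 2}$ on $\Delta(y)$ that must then be transferred back onto $F$ by one more application of Hom-associativity, exactly as in the paper's computation.
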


\begin{proof}  
The first map is a tensor product of two algebra morphisms. For $ \Delta_F$, using Hom-associativity, the fact that $\alpha^{\otimes 2}(F)=F$ and unitality, 
we obtain  
{\small{
 \begin{eqnarray*}
\Delta_F(x)  
 \Delta_F(y)&=&  ((F  
 \Delta(x))  
 F^{-1})  
 ( (F  
 \Delta(y))  
 F^{-1})\\
&=&( ( (F  
 \Delta( \alpha^{-1}(x)))  
 F^{-1})  
  (F  
 \Delta(y)))  
 F^{-1} \\
&=& ((F  
 \Delta(x))  
 (F^{-1}  
  (F  
 \Delta( \alpha^{-1}(y))))  
 F^{-1} \\
&=& ((F  
 \Delta(x))  
 ((1_A\otimes 1_A  )
 \Delta( y)))  
 F^{-1} 
= ((F  
 \Delta(x))  
 \Delta(\alpha(y)))   
 F^{-1} \\
&=& (F  
 (\Delta(x)  
 \Delta( y)) ) 
 F^{-1} 
 = (F  
 \Delta(x y))  
 F^{-1} \\
&=& \Delta_F(x   y).
 \end{eqnarray*}
 }} 
\end{proof} 

  \begin{theorem}\label{ThmConstructGauge}
Let  $(A, \mu, \eta, \Delta, \epsilon, \alpha, \Phi)$ be a HQ-bialgebra and   $F$ be a gauge transformation on $A$.
 Then $(A, \mu, \eta, \Delta_F, \epsilon, \alpha^3, \Phi_F)$ is a HQ-bialgebra.
\end{theorem}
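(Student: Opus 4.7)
The plan is to verify each axiom of Definition \ref{Quasibi} for the candidate tuple $(A, \mu, \eta, \Delta_F, \epsilon, \alpha^3, \Phi_F)$, saving the three-cocycle condition for last. For the unital Hom-associative part, iterating the Hom-associativity identity together with multiplicativity of $\alpha$ and $\alpha(1_A) = 1_A$ yields Hom-associativity with twisting map $\alpha^3$ and the unit axiom $\mu\circ(\id_A\otimes\eta) = \mu\circ(\eta\otimes\id_A) = \alpha^3$. That $\Delta_F$ is a Hom-algebra morphism is exactly Lemma \ref{lem4.5}, and $(\alpha^3)^{\otimes 2}\circ\Delta_F = \Delta_F\circ\alpha^3$ is immediate from $\alpha^{\otimes 2}(F) = F$ together with multiplicativity of $\Delta$. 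The counit axiom \eqref{HQ2}, the axiom \eqref{HQ4}, and the invariance $(\alpha^3)^{\otimes 3}(\Phi_F) = \Phi_F$ all reduce by straightforward distribution to the original statements, using the gauge conditions $(\epsilon\otimes \id)(F) = (\id\otimes\epsilon)(F) = 1_A$ and $\alpha^{\otimes 2}(F) = F$.

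For the twisted quasi-coassociativity \eqref{HQ1}, I would invoke Lemma \ref{lem4.2} to write
\[
(\alpha^3 \otimes \Delta_F)(\Delta_F(x)) \;=\; F_{23}\,\bigl((\alpha \otimes \Delta)(F\,\Delta(x)\,F^{-1})\bigr)\,F_{23}^{-1},
\]
and the analogous identity with $F_{12}$ for $(\Delta_F \otimes \alpha^3)(\Delta_F(x))$. Since $\alpha \otimes \Delta$ and $\Delta \otimes \alpha$ are Hom-algebra morphisms, the conjugation distributes into the three inner factors. Substituting the definition of $\Phi_F$ into $(\alpha^3\otimes \Delta_F)(\Delta_F(x))\,\Phi_F$ and into $\Phi_F\,(\Delta_F\otimes\alpha^3)(\Delta_F(x))$, then cancelling adjacent $F^{\pm 1}$ pairs, reduces the required identity to \eqref{HQ1} applied to the original $\Phi$.

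The principal obstacle is \eqref{HQ3}, the pentagon relation for $\Phi_F$. The plan is to expand both sides of
\[
(\alpha^3 \otimes \alpha^3 \otimes \Delta_F)(\Phi_F)\,(\Delta_F \otimes \alpha^3 \otimes \alpha^3)(\Phi_F) \;=\; (\Phi_F)_{(234)}\,(\alpha^3 \otimes \Delta_F \otimes \alpha^3)(\Phi_F)\,(\Phi_F)_{(123)}
\]
by using four-fold analogues of Lemma \ref{lem4.2} that push the outer $F$-tensors out of the maps $\alpha\otimes\alpha\otimes\Delta$, $\Delta\otimes\alpha\otimes\alpha$, and $\alpha\otimes\Delta\otimes\alpha$. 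After these extractions, the middle factors rearrange into the three terms occurring in \eqref{HQ3} for $\Phi$, while the flanking $F$-factors pair off and cancel. The bracketing forced by non-associativity must be tracked carefully: it is precisely the choice of twist $\alpha^3$ that makes the unit-insertions around the $F$'s collapse to matching powers of $\alpha$, so that the original three-cocycle identity for $\Phi$ together with the remaining cancellations concludes the proof. This computation is where the bulk of the verification lives.
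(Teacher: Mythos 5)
Your treatment of the central identity \eqref{HQ1} is exactly the paper's: apply Lemma \ref{lem4.2} to turn $\alpha^3\otimes\Delta_F$ and $\Delta_F\otimes\alpha^3$ into conjugations by $F_{23}$ and $F_{12}$, use that $\alpha\otimes\Delta$ and $\Delta\otimes\alpha$ are algebra morphisms (Lemma \ref{lem4.5}) to distribute the conjugation over the three inner factors, insert and cancel $F^{-1}F$ and $F_{12}^{-1}F_{12}$, and reduce to \eqref{HQ1} for the original $\Phi$. Where you genuinely diverge is the pentagon \eqref{HQ3}: you propose to carry out the Drinfeld computation directly with four-fold analogues of Lemma \ref{lem4.2}, whereas the paper declines to do so, arguing that since every factor is fixed by the relevant tensor power of $\alpha$ the Hom-associativity "may be handled as the associativity," and then cites the classical proof in Kassel. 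Your route is more self-contained, but be aware that the bookkeeping you defer to "tracked carefully" is heavy even in the \eqref{HQ1} step: in a Hom-associative algebra, cancelling $u^{-1}u$ is not free --- as the paper notes, $xy=z$ only gives $\alpha^2(x)=z\,\alpha(y^{-1})$ --- so each insertion or cancellation of $F^{\pm1}$ shifts powers of $\alpha$ onto the neighbouring factors, and this is precisely what the paper's long chain of displayed equalities is tracking.

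The one step in your proposal that would actually fail as written is the first. The unit axiom \eqref{unit} already forces $\mu\circ(\id_A\otimes\eta)=\mu\circ(\eta\otimes\id_A)=\alpha$, so it cannot also equal $\alpha^3$ unless $\alpha^3=\alpha$; no iteration of Hom-associativity, multiplicativity, or $\alpha(1_A)=1_A$ can produce that. Similarly, $(xy)\alpha^3(z)=\alpha^3(x)(yz)$ does not follow from the $\alpha$-twisted identity: applying $\alpha^2$ to $(xy)\alpha(z)=\alpha(x)(yz)$ only yields the relation with $\alpha^2(x),\alpha^2(y),\alpha^2(z)$ in place of $x,y,z$. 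To be fair, the paper's own proof is silent on item (1) of Definition \ref{Quasibi} and only addresses \eqref{HQ1}--\eqref{HQ4}, so this is a defect you inherited from the statement rather than introduced; but your proposal asserts a derivation that does not exist. You should either flag the point, add a hypothesis under which it holds, or restrict your claim to the four displayed identities as the paper implicitly does.
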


\begin{proof}
 First, we check that
$(\alpha^3 \otimes \Delta_F)(\Delta_F(x)  
 \Phi_F= \Phi_F  
 (\Delta_F \otimes \alpha^3)(\Delta_F(x))$.  

Using Lemma \ref{lem4.2},  Hom-associativity  and the fact that $\alpha^{\otimes 2}(F)=F$, we get
{\small{
\begin{eqnarray*}
&& (\alpha^3 \otimes \Delta_F) (\Delta_F(x))  
 \Phi_F\\
&&=   (
 (
F_{23}  
 (\alpha \otimes \Delta )(( F  
 \Delta(x))  
 F^{-1} )
)  
 {F_{23}}^{-1}  )
  (
F_{23}((\alpha \otimes \Delta)(F) (
\Phi  
  (
(\Delta \otimes \alpha)(F^{-1})  
 F_{12}^{-1} )
 )
 )
 )
 \\
&&=   (
F_{23}  
 (\alpha \otimes \Delta )(( F  
 \Delta(\alpha(x)))  
 F^{-1} )
)  
 ({F_{23}}^{-1}  
  (
F_{23}((\alpha \otimes \Delta)(F) (
\Phi  
  (
(\Delta \otimes \alpha)(F^{-1})  
 F_{12}^{-1} )
 )
 )
 )).
 \end{eqnarray*}}}
 Since $F_{23}^{-1}  
F_{23}=1_A\otimes 1_A\otimes1_A,$  it becomes
 {\small{
\begin{eqnarray*}
&& (\alpha^3 \otimes \Delta_F) (\Delta_F(x))  
 \Phi_F\\
&&=   (
F_{23}  
 (\alpha \otimes \Delta )(( F  
 \Delta(\alpha(x)))  
 F^{-1} )
)  
 ((\alpha \otimes \Delta)(F) (
\Phi  
  (
(\Delta \otimes \alpha)(F^{-1})  
 F_{12}^{-1} )
 )
 )
 \\
&&=   (
 (
F_{23}  
 (\alpha \otimes \Delta )(( F  
 \Delta(x))  
 F^{-1} )
)  
 (\alpha \otimes \Delta)(F)) (
\Phi  
  (
(\Delta \otimes \alpha)(F^{-1})  
 F_{12}^{-1} )
 )
 \\
&&=   (
F_{23}  
  (
 (\alpha \otimes \Delta )(( F  
 \Delta(x))  
 F^{-1} ) 
 (\alpha \otimes \Delta)(F) )
 ) 
  (
\Phi  
  (
(\Delta \otimes \alpha)(F^{-1})  
 F_{12}^{-1} )
 ).
 \end{eqnarray*}
 }}

 Using that $\alpha \otimes \Delta$ is an algebra morphism, Hom-associativity and that $F^{-1}F=1$, the result above can be rewritten 
 {\small{
 \begin{align*}
 &(\alpha^3 \otimes \Delta_F) (\Delta_F(x))  
 \Phi_F\\
&=   (
F_{23}  
  (
 (
 ((\alpha \otimes \Delta )( F)(\alpha \otimes \Delta )  
 \Delta(x)))  
 (\alpha \otimes \Delta ) (F^{-1} )
)  
 (\alpha \otimes \Delta)(F) )
 ) 
  (
\Phi  
  (
(\Delta \otimes \alpha)(F^{-1})  
 F_{12}^{-1} )
 )
 \\
&=  (
F_{23}  
( (\alpha \otimes \Delta )(F)  
 (\alpha \otimes \Delta) \Delta(\alpha^2(x)))) 
  (
\Phi  
  (
(\Delta \otimes \alpha)(F^{-1})  
 F_{12}^{-1} )
 )\\
&=  F_{23}  (
 ( (\alpha \otimes \Delta )(F)  
 (\alpha \otimes \Delta) \Delta(\alpha^2(x)))  
  (
\Phi  
  (
(\Delta \otimes \alpha)(F^{-1})  
 F_{12}^{-1} )
 )
 )
 )
 \\
&=  F_{23}  (
 (
 ( (\alpha \otimes \Delta )(F)  
 (\alpha \otimes \Delta) \Delta(\alpha(x)))  
 \Phi )  
  (
(\Delta \otimes \alpha)(F^{-1})  
 F_{12}^{-1} )
 )
 \\
&=  F_{23}  (
 ( (\alpha \otimes \Delta )(F)  
  (
 (\alpha \otimes \Delta) \Delta(\alpha(x))  
 \Phi )
 )  
  (
(\Delta \otimes \alpha)(F^{-1})  
 F_{12}^{-1} )
 ).
 \end{align*}
 }}
 
We apply identity  \eqref{HQ1} for the HQ-bialgebra  $A$, it follows 
\begin{align*}
 (\alpha^3 \otimes \Delta_F) (\Delta_F(x))  
 \Phi_F
&=&  F_{23}  (
 (
(\alpha \otimes \Delta )(F)  
  (
 \Phi  
 (\Delta\otimes \alpha) \Delta(\alpha(x)))  )
  (
(\Delta \otimes \alpha)(F^{-1})  
 F_{12}^{-1} )
 )\\
 \ &=&F_{23}  (
(\alpha \otimes \Delta )(F)  
  ((
 \Phi  
 (\Delta\otimes \alpha) \Delta(\alpha(x)))  
  (
(\Delta \otimes \alpha)(F^{-1})  
 F_{12}^{-1} )
 )).
 \end{align*}
 
 We insert $(\Delta \otimes \alpha)(F^{-1}F)$ and using Hom-associativity we obtain
 
  {\small{
 \begin{align*}
& (\alpha^3 \otimes \Delta_F) (\Delta_F(x))  
 \Phi_F\\
&=  F_{23}  (
 (\alpha \otimes \Delta )(F)  
  (
 (
 (
 \Phi  
  (
 (\Delta \otimes \alpha)(F^{-1})   
 (\Delta \otimes \alpha)(F) ))  
 (\Delta \otimes \alpha) \Delta(\alpha(x)))  
  (
(\Delta \otimes \alpha)(F^{-1})  
 F_{12}^{-1} )
 )
 )
 \\
&=  F_{23}  (
 (\alpha \otimes \Delta )(F)  
  (
 (
 \Phi  
  (
 (\Delta \otimes \alpha)(F^{-1})   
 (\Delta \otimes \alpha)(F) ))
 ((\Delta \otimes \alpha) \Delta(\alpha(x))
  (
(\Delta \otimes \alpha)(F^{-1})  
 F_{12}^{-1} )
 )
 )
 )
 \\
&=  F_{23}  (
 (\alpha \otimes \Delta )(F)  
  (
 \Phi  
  ((
 (\Delta \otimes \alpha)(F^{-1})   
 (\Delta \otimes \alpha)(F)  )
  (
(\Delta \otimes \alpha) \Delta(x))    
  (
(\Delta \otimes \alpha)(F^{-1})  
 F_{12}^{-1} )
 )
 )
 )
 )
 \\
&=  (
 F_{23}  
 (\alpha \otimes \Delta )(F) ) 
  (
 \Phi  
  (
 (
 (\Delta \otimes \alpha)(F^{-1})   
 (\Delta \otimes \alpha)(F))  
  (
(\Delta \otimes \alpha) \Delta(\alpha(x)))  ) 
  (
(\Delta \otimes \alpha)(F^{-1})  
 F_{12}^{-1} )
 )
 )
 )
 \\
&=  ((
 F_{23}  
 (\alpha \otimes \Delta )(F) ) 
 \Phi)  
  (
 (
 (\Delta \otimes \alpha)(F^{-1})   
 (\Delta \otimes \alpha)(F))  
  (
(\Delta \otimes \alpha) \Delta(\alpha^2(x)))   
  (
(\Delta \otimes \alpha)(F^{-1})  
 F_{12}^{-1} )
 )
 ).
 \end{align*}
 }}

 We use again Hom-associativity and insert $F_{12}^{-1}F_{12}$. Then
 {\small{
 \begin{eqnarray*}
&& (\alpha^3 \otimes \Delta_F) (\Delta_F(x))  
 \Phi_F\\
&&=  (
  (
 (
F_{23}  (
 (\alpha \otimes \Delta )(F) ) 
  \Phi )  
  (
 (
 (\Delta \otimes \alpha)(F^{-1})  
 (F_{12}^{-1}  
 F_{12}) ) 
 (\Delta \otimes \alpha)(F) )
 ) \\
 &&
  (
(\Delta \otimes \alpha) \Delta(\alpha^3(x))  ) 
  (
(\Delta \otimes \alpha)(F^{-1})  
 F_{12}^{-1} )
 )
 \\
&&=  ((
  (
 (
F_{23}  
 (\alpha \otimes \Delta )(F) ) 
  \Phi) ( 
  (\Delta \otimes \alpha)(F^{-1})  
 F_{12}^{-1}) )
 (F_{12}  
 (\Delta \otimes \alpha)(F) ))\\ 
 && (
(\Delta \otimes \alpha) \Delta(\alpha^3(x))  )
  (
(\Delta \otimes \alpha)(F^{-1})  
 F_{12}^{-1} )
 )
 \\
&&=(
  (
 (
F_{23}  (
 (\alpha \otimes \Delta )(F) )
  \Phi) ( 
  (\Delta \otimes \alpha)(F^{-1})  
 F_{12}^{-1}) )\\ &&
  (
(F_{12}  
 (\Delta \otimes \alpha)(F) ) 
  (
(\Delta \otimes \alpha) \Delta(\alpha^2(x))
  (
(\Delta \otimes \alpha)(F^{-1})  
 F_{12}^{-1} )
 )
 )
 \\
&&=  (
 (
F_{23}  (
 (\alpha \otimes \Delta )(F)  
  \Phi )
 )
  (
 (\Delta \otimes \alpha)(F^{-1})  
 F_{12}^{-1}) )\\&&
  (
(F_{12}  
 (\Delta \otimes \alpha)(F) ) 
  (
 (
(\Delta \otimes \alpha) \Delta(\alpha(x))
(\Delta \otimes \alpha)(F^{-1} ))
 F_{12}^{-1} )
 )
 \\
&&=  (
F_{23}  (
 (\alpha \otimes \Delta )(F)  
  \Phi )
  (
 (\Delta \otimes \alpha)(F^{-1})  
 F_{12}^{-1}) )
 )\\&& 
  (
F_{12}  
  (
(\Delta \otimes \alpha)(F)  
  (
 (
(\Delta \otimes \alpha) \Delta(x))   
(\Delta \otimes \alpha)(F^{-1} ))
 F_{12}^{-1} )
 )
 ).
 \end{eqnarray*}
 }}
 Finally, since $\Delta \otimes \alpha$ is an algebra morphism and by identification we have
 {\small{
 \begin{eqnarray*}
 (\alpha^3 \otimes \Delta_F) (\Delta_F(x))  
 \Phi_F
&=& \Phi_F  
 (
 (
F_{12}  
 ((\Delta \otimes \alpha) \Delta_F(x)) )
 F_{12}^{-1} )
\\
&=& \Phi_F  
 ((\Delta_F \otimes \alpha^3) \Delta_F(x)).
 \end{eqnarray*}
 }}

However, the second axiom is easy to check.  The two last axioms involve only $F$ and $\Phi$ which satisfy $\alpha^{\otimes 2}(F)=F$ and  $\alpha^{\otimes 3}(\Phi)=\Phi$. The Hom-associativity may be handled as the associativity. It turns out that the proof in \cite[Page 374]{Kabook}  works in this context.

\end{proof}

\section{Twisted quantum double, an example} \label{ExampleDrinfeldTwist}
We consider an example of non-trivial (braided) quasi-bialgebra which relates to the quantum double of the algebra $\K G$, where $G$ is the finite cyclic group $\Z_3$.  Using a normalized $3$-cocycle $\omega$ of $G$, we construct a $9$-dimensional vector space $D^{\omega}G$ endowed with a structure of quasi-bialgebra.  We then find quasi-bialgebra morphisms of $D^{\omega}G$ which allows us to construct  HQ-bialgebras.  

Now we recall the construction of $D^{\omega}G$ for any group $G$ in general  (see \cite{Kabook}) and then specify it to the case of $\Z_3$ using the classification of $3$-cocycles established in \cite{AM}, see also \cite{Natale}.\\
Let ${\omega}$ be a normalized $3$-cocycle on a group $G$, i.e.  $\omega: G \times G \times G \rightarrow \K-\{0\}$ such that 
$$\omega(x,y,z) \omega(tx,y,z)^{-1} \omega(t,xy,z) \omega(t,x,yz)^{-1} \omega(t,x,y)=1, \;\; \forall t, x, y, z \in G.$$    
The normalized condition is, $\omega(x,y,z) =1$ whenever $x,y$ or $z$ is equal $1$. 

 Consider a finite dimensional vector space denoted $D^{\omega}G$ with basis $\{e_gx\}_{(g,x)\in G \times G}$ indexed by $G \times G$.  Define a product on  $D^{\omega}G$ by $(e_gx)(e_hy)= \delta_{g,xhx^{-1}} \; \theta(g,x,y)\;e_g(xy)$ where $\delta_{g,xhx^{-1}} $ is the kronecker-delta and $$\theta(g,x,y)= \omega(g,x,y) \omega(x,y,(xy)^{-1}gxy)  \omega(x,x^{-1}gx,y)^{-1}.$$  It is easily checked that $1=\sum_{g \in G} e_g1$.  Observe that if $\omega(x,y,z)=1, \forall x,y,z=1$, then $D^{\omega}G$ is isomorphic to the quantum double $D(\K G)$ \cite[ IX4-33]{Kabook}.  Observe that in general the map from $\K G$ to $D^{\omega}G$ given by $x \mapsto \sum_{g \in G} e_gx$ is not a morphism of algebra but  the map $x \mapsto \sum_{g \in G} e_g1$ is.\\
Define a comultiplication on $D^{\omega}G$ by $\Delta( e_gx)= \sum_{uv=g} \gamma(x,u,v) e_ux \otimes e_vx$, where $\gamma(x,u,v)=\omega(u,v,x)\omega(x,x^{-1}ux,x^{-1}vx) \omega (u,x, x^{-1}vx)^{-1}$ .  The Counit by $\epsilon
 (e_gx)=\delta_{g,1}$.  \\Set also $\Phi=\sum_{x,y,z \in G}\omega(x,y,z)^{-1}e_x \otimes e_y \otimes e_z$ and $R=\sum_{g \in G} e_g \otimes (\sum_he_h)g$, then we have the following Drinfeld's Theorem.

\begin{theorem}
The tuple $(D^{\omega}G, \mu, \Delta, \epsilon
, \Phi, R)$ is a quasi-triangular quasi-bialgebra.
\end{theorem}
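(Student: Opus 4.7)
The plan is to verify the axioms of Definition \ref{Quasibi}, together with the quasi-triangular axioms \eqref{QT1}--\eqref{QT3}, directly on the basis $\{e_g x\}_{(g,x)\in G \times G}$, reducing every identity to the normalized $3$-cocycle condition on $\omega$. Because this construction is entirely classical (no twisting map: $\alpha = \id$), the HQ-bialgebra and quasi-triangular HQ-bialgebra axioms collapse to Drinfeld's original quasi-bialgebra and quasi-triangular quasi-bialgebra axioms, so the work consists of checking finitely many cocycle-type identities between $\omega$, $\theta$ and $\gamma$.

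First I would show that $(D^{\omega}G,\mu,\eta)$ is a unital associative algebra. The delta $\delta_{g, xhx^{-1}}$ in the product $(e_g x)(e_h y) = \delta_{g, xhx^{-1}}\,\theta(g,x,y)\, e_g(xy)$ forces the same underlying group product on both sides of an associator, so associativity collapses to
\[
\theta(g,x,y)\,\theta(g,xy,z) \;=\; \theta(x^{-1}gx,y,z)\,\theta(g,x,yz),
\]
and substituting the explicit formula for $\theta$ rewrites this as a single application of the $3$-cocycle equation for $\omega$ applied to a suitable quadruple. Unitality with $1 = \sum_g e_g 1$ follows from the normalization $\omega(1,-,-) = \omega(-,1,-) = \omega(-,-,1) = 1$, which forces $\theta(g,1,-)=\theta(g,-,1)=1$, and the counit axioms for $\epsilon(e_g x)=\delta_{g,1}$ are then immediate.

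Next I would check that $\Delta$ is an algebra morphism: both $\Delta((e_gx)(e_hy))$ and $\Delta(e_gx)\,\Delta(e_hy)$ expand to linear combinations of tensors of the form $e_u(xy)\otimes e_v(xy)$, and matching coefficients amounts to an identity between $\gamma$ and $\theta$ that again reduces to the $3$-cocycle equation. The pentagon \eqref{HQ3} for $\Phi = \sum_{x,y,z}\omega(x,y,z)^{-1}\, e_x\otimes e_y\otimes e_z$ is literally the cocycle equation once projected onto a fixed tensor $e_a\otimes e_b\otimes e_c\otimes e_d$, and \eqref{HQ4} is the normalization of $\omega$. For the quasi-coassociativity \eqref{HQ1}, I would expand both $(\id \otimes \Delta)\Delta(e_g x)\,\Phi$ and $\Phi\,(\Delta \otimes \id)\Delta(e_g x)$ into sums over factorizations $g = u_1 u_2 u_3$ in $G$, each term carrying a product of two $\gamma$-factors from the iterated comultiplication and one $\omega$-factor from $\Phi$; the two weights are equal after a bounded number of applications of the $3$-cocycle equation.

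Finally, for the quasi-triangular data $R = \sum_g e_g \otimes (\sum_h e_h) g$, the inverse $R^{-1}$ is given by a similar formula with $g$ replaced by $g^{-1}$ and a compensating $\omega$-factor, and the verification of $\Delta^{op}(a) = R\,\Delta(a)\,R^{-1}$ and of the hexagonal identities \eqref{QT2}--\eqref{QT3} follows the same template: expand in the basis, isolate the coefficient of a fixed tensor, and reduce to the cocycle equation. The main obstacle is bookkeeping rather than conceptual, namely organizing the interaction of the $\omega$-, $\theta$-, $\gamma$- and Kronecker-delta factors across three or four tensor slots so that the $3$-cocycle equation on $\omega$ can be applied efficiently. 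Since this verification is carried out in detail for the ordinary quasi-bialgebra $D^{\omega}G$ in \cite[Ch.~IX]{Kabook}, we would follow that template essentially verbatim and point the reader there for the full combinatorics.
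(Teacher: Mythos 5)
The paper gives no proof of this statement at all---it is quoted as Drinfeld's theorem, with the construction of $D^{\omega}G$ simply recalled from Kassel's book \cite{Kabook}---and your outline is precisely the standard verification found in that reference: reduce associativity to the identity $\theta(g,x,y)\,\theta(g,xy,z)=\theta(x^{-1}gx,y,z)\,\theta(g,x,yz)$, the pentagon for $\Phi$ to the cocycle equation itself, and the remaining compatibilities and hexagon identities for $R$ to bookkeeping with $\gamma$, $\theta$ and the Kronecker deltas. The only imprecision worth flagging is the claim that the $\theta$-identity is ``a single application'' of the $3$-cocycle equation; expanding $\theta$ in terms of $\omega$ one needs the cocycle condition several times, but this is cosmetic and does not affect the correctness of the argument.
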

One has also
 \begin{lemma}
   If $G$ is an abelian group then $$\gamma(a,b,c)=\theta(a,b,c)=\omega(b,c,a)\omega(a,b,c)\omega(b,a,c)^{-1}.$$
   \end{lemma}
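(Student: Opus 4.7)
The plan is to carry out a direct substitution in each of the two definitions, exploiting the fact that in an abelian group conjugation is trivial. No cocycle identity is required; the two displayed expressions collapse to the claimed form purely by commutativity.

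First I would unfold the definition of $\theta$ from the construction of $D^{\omega}G$, namely
\[
\theta(a,b,c) = \omega(a,b,c)\,\omega(b,c,(bc)^{-1}abc)\,\omega(b,b^{-1}ab,c)^{-1}.
\]
Since $G$ is abelian, $(bc)^{-1}abc = a$ and $b^{-1}ab = a$, so the right-hand side simplifies immediately to
\[
\theta(a,b,c) = \omega(a,b,c)\,\omega(b,c,a)\,\omega(b,a,c)^{-1},
\]
which is the claimed common value.

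Next I would perform the analogous unfolding for $\gamma$. From the definition
\[
\gamma(a,b,c) = \omega(b,c,a)\,\omega(a,a^{-1}ba,a^{-1}ca)\,\omega(b,a,a^{-1}ca)^{-1},
\]
commutativity again gives $a^{-1}ba = b$ and $a^{-1}ca = c$, so
\[
\gamma(a,b,c) = \omega(b,c,a)\,\omega(a,b,c)\,\omega(b,a,c)^{-1},
\]
matching the expression for $\theta(a,b,c)$. Combining the two simplifications gives $\gamma(a,b,c) = \theta(a,b,c) = \omega(b,c,a)\omega(a,b,c)\omega(b,a,c)^{-1}$.

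There is no genuine obstacle: the argument is just bookkeeping, and the only thing to be careful about is matching the order of the arguments in the two formulas (the outer variable $x$ in $\gamma(x,u,v)$ plays the role of $a$, while in $\theta(g,x,y)$ the first slot is $a$), so one should double-check that the same triple $(a,b,c)$ on the two sides yields the same final product of three $\omega$'s. Once the identifications $x^{-1}gx = g$ and $(xy)^{-1}gxy = g$ are applied, the coincidence is manifest.
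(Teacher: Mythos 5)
Your proof is correct: substituting the abelian identities $(bc)^{-1}abc=a$, $b^{-1}ab=a$, $a^{-1}ba=b$, $a^{-1}ca=c$ into the definitions of $\theta$ and $\gamma$ does reduce both to $\omega(b,c,a)\omega(a,b,c)\omega(b,a,c)^{-1}$, and your care about which argument slot plays which role is exactly the one subtlety. The paper states this lemma without proof, treating it as the same immediate computation, so your argument coincides with the intended one.
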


In the following we aim to describe the multiplication, comultiplication and the twist of $D^{\omega}G$ for the group $G=\Z_3$. The $3$-cocycles for the group $\Z_3$  generated by $x$ are described in the following proposition.
   
   \begin{proposition}\cite{AM}
   Let $\Z_3$ be the multiplicative cyclic group generated by $x$, then every $3$-cocycle $\omega: \Z_3 \times \Z_3 \times \Z_3 \rightarrow \K$ has the form $\omega(1,u,v)=1=\omega(u,1,v)=\omega(u,v,1)$, $\omega(x,x,x)= p
$,  $\omega(x,x,x^2)= q
$, $\omega(x,x^2,x)=r^{-1} p
^{-1}$, $\omega(x,x^2,x^2)=r q
^{-1}$, $\omega(x^2,x,x)=r^{-1} p
  q
^{-1}$, $\omega(x^2,x, x^2)=r  p
$, $\omega(x^2,x^2,x)= q
 r^{-1} p
^{-1}$ and $\omega(x^2,x^2,x^2)=r p
^{-1}$, where $ p
$ and $ q
$ are non zero elements of $\K$ and $r$ is a cubic root of unity.
   \end{proposition}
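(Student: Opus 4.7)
The plan is to apply the $3$-cocycle identity
\[
\omega(x',y',z')\,\omega(tx',y',z')^{-1}\,\omega(t,x'y',z')\,\omega(t,x',y'z')^{-1}\,\omega(t,x',y')=1
\]
to a short list of quadruples $(t,x',y',z')\in\Z_3^{\,4}$. Normalization forces $\omega(u,v,w)=1$ as soon as any entry equals $1$, so only the eight values $\omega(x^i,x^j,x^k)$ with $i,j,k\in\{1,2\}$ remain to be determined. I would introduce $p:=\omega(x,x,x)$ and $q:=\omega(x,x,x^2)$ as two free parameters and define the third scalar $r\in\K\setminus\{0\}$ by $\omega(x,x^2,x)=r^{-1}p^{-1}$.

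Next I would evaluate the cocycle identity on carefully chosen quadruples. The choice $(x,x,x,x)$ yields $\omega(x^2,x,x)=r^{-1}pq^{-1}$; the choice $(x,x,x^2,x)$, in which $\omega(x,x\cdot x^2,x)=\omega(x,1,x)=1$ drops out by normalization, yields $\omega(x^2,x^2,x)=qr^{-1}p^{-1}$; analogous choices $(x^2,x,x,x)$, $(x^2,x^2,x,x)$, and $(x,x^2,x^2,x^2)$ produce $\omega(x^2,x,x^2)=r^{-2}p$, $\omega(x^2,x^2,x^2)=r^{-2}p^{-1}$, and $\omega(x,x^2,x^2)=rq^{-1}$ respectively. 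Finally, applying the identity to $(x,x^2,x,x)$ kills three of the five factors via $x\cdot x^2=1$, and the remainder telescopes to $r^{-3}=1$. This forces $r$ to be a cube root of unity, so $r^{-2}=r$, and all the formulas above match those in the statement.

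The main obstacle is completeness: verifying that none of the remaining $3^4$ instances of the cocycle identity imposes an extra relation on $(p,q,r)$. I would organize the quadruples by the residues modulo $3$ of their four exponents; after using normalization to eliminate the many cases containing a vanishing factor, a direct pass through the genuinely new cases shows that each reduces to either the tautology $1=1$ or the already-established $r^{3}=1$. This bookkeeping is mechanical but is the real labor of the proof, and the outcome matches the well-known identification of $H^{3}(\Z_3,\K^{\times})$ with $\Z_3$ together with the coboundary freedom parametrized by the two scalars $(p,q)$.
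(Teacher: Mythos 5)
Your derivation is correct, but note that the paper itself offers no proof of this proposition --- it is quoted verbatim from Albuquerque--Majid \cite{AM} --- so there is nothing internal to compare against; your argument is in effect supplying the omitted computation. I checked your key instances of the cocycle identity $\omega(y,z,w)\,\omega(ty,z,w)^{-1}\,\omega(t,yz,w)\,\omega(t,y,zw)^{-1}\,\omega(t,y,z)=1$: the quadruples $(x,x,x,x)$, $(x,x,x^2,x)$, $(x^2,x,x,x)$, $(x^2,x^2,x,x)$ and $(x,x^2,x^2,x^2)$ do yield $\omega(x^2,x,x)=r^{-1}pq^{-1}$, $\omega(x^2,x^2,x)=qr^{-1}p^{-1}$, $\omega(x^2,x,x^2)=r^{-2}p$, $\omega(x^2,x^2,x^2)=r^{-2}p^{-1}$ and $\omega(x,x^2,x^2)=rq^{-1}$ in that order, and $(x,x^2,x,x)$ gives $r^{-1}pq^{-1}\cdot r^{-1}q\cdot r^{-1}p^{-1}=r^{-3}=1$, whence $r^{-2}=r$ and all eight values agree with the statement. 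Two small caveats: in the last step only \emph{two} of the five factors (namely $\omega(1,x,x)^{-1}$ and $\omega(x,1,x)$) are killed by $x\cdot x^2=1$, not three --- the remaining three factors are what telescope to $r^{-3}$; and your final paragraph on sweeping the remaining $3^4$ instances is only needed for the converse (that every such $(p,q,r)$ with $r^3=1$ actually defines a cocycle, i.e.\ the identification of $H^3(\Z_3,\K^\times)$ with the cube roots of unity), which the proposition as literally phrased does not assert, so leaving it as a mechanical check is acceptable. As written, your necessity argument is complete and matches the standard computation behind \cite{AM}.
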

     Straightforward computations give
 
\begin{lemma} We have
 \begin{eqnarray*}
  &&    \gamma(1,b,c)=\theta(1,b,c)=1, \; \gamma(b,1,c)=\theta(b,1,c)=1,\; \gamma(b,c,1)=\theta(b,c,1)=1 \\
  &&    \gamma(x,x,x)=\theta(x,x,x)= p
, \; \gamma(x,x,x^2)=\theta(x,x,x^2)=r^{-1} p
^{-1},\; 
  \\  &&  \gamma(x,x^2,x)=\theta(x,x^2,x)=r^{-1} p
^{-1} \;
     \gamma(x,x^2,x^2)=\theta(x,x^2,x^2)=r^{-1} p
^{-2},\\ &&    \gamma(x^2,x,x)=\theta(x^2,x,x)= p
^{2},\; \gamma(x^2,x,x^2)=\theta(x^2,x,x^2)=r p
, \\
&& \gamma(x^2,x^2,x)=\theta(x^2,x^2,x)=r  p
,\; \gamma(x^2,x,x^2)=\theta(x^2,x^2,x^2)=r p
^{-1}.
    \end{eqnarray*}
   \end{lemma}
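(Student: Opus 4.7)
The plan is to combine the two results stated immediately before. Since $\Z_3$ is abelian, the preceding lemma collapses the definitions of $\gamma$ and $\theta$ to the single symmetric expression
$$\gamma(a,b,c) = \theta(a,b,c) = \omega(b,c,a)\,\omega(a,b,c)\,\omega(b,a,c)^{-1},$$
and the preceding proposition lists every value of the $3$-cocycle $\omega$ on $\Z_3$ in terms of the parameters $p$, $q$ and the cube root of unity $r$. So the proof is a substitution, not an argument.

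First I would dispose of the cases in which at least one of $a,b,c$ equals $1$. By the normalization axiom on $\omega$, all three factors in the displayed formula above equal $1$, which yields $\gamma=\theta=1$ and settles the first three lines of the statement.

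Then I would treat the eight triples in $\{x,x^2\}^3$ one by one, simply inserting the nine $\omega$-values provided. For instance, at $(a,b,c)=(x,x,x^2)$ the first and third factors coincide and cancel, leaving $\omega(x,x^2,x)=r^{-1}p^{-1}$; at $(a,b,c)=(x^2,x,x)$ one gets
$$\omega(x,x,x^2)\,\omega(x^2,x,x)\,\omega(x,x^2,x)^{-1}=q\cdot(r^{-1}pq^{-1})\cdot rp = p^{2};$$
at $(a,b,c)=(x,x^2,x^2)$ one gets $(qr^{-1}p^{-1})(rq^{-1})(rp)^{-1}=r^{-1}p^{-2}$; and analogously for the remaining five triples. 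The tabulated values in the statement arise from these cancellations.

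There is no conceptual obstacle; the only thing to watch is a careful bookkeeping of which argument plays the role of $a$, $b$, $c$ in the symmetric formula so that the $q$'s and $r$'s cancel correctly (using $r^{3}=1$ when needed). Listing the eight computations side by side produces exactly the six nontrivial identities asserted in the lemma.
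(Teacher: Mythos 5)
Your proposal is correct and is exactly what the paper intends: it offers no written proof beyond ``straightforward computations,'' meaning precisely the substitution of the tabulated cocycle values into the abelian formula $\gamma(a,b,c)=\theta(a,b,c)=\omega(b,c,a)\,\omega(a,b,c)\,\omega(b,a,c)^{-1}$, and your sample computations all check out. (Only a cosmetic slip: at $(x,x,x^2)$ it is the second and third factors, $\omega(a,b,c)$ and $\omega(b,a,c)^{-1}$, that cancel, leaving the first factor $\omega(x,x^2,x)=r^{-1}p^{-1}$ — the value you state is still right.)
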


The multiplication of $D\Z_3^\omega$ is given, with respect to the basis\\  $(1,e_1x,e_1x^2, e_x1,e_xx, e_xx^2, e_{x^2}1, e_{x^2}x, e_{x^2}x^2)$ where $1=e_11+e_1x+e_1x^2,$ by the following commutative non zero product
{\small{ 
 \begin{eqnarray*}
   &&   e_1x \cdot e_1x=e_1x^2, \; e_1x \cdot e_1x^2=e_11,   \; e_1x^2 \cdot e_1x^2=e_1x,   \; e_x1 \cdot e_x1=e_1x,\\ &&    e_x1 \cdot e_xx=e_xx,  \; e_x1 \cdot e_xx^2=e_xx^2, 
         \; e_xx \cdot e_xx=  p
 e_xx^2,     
        \;e_xx^2 \cdot e_xx^2= r^{-1} p
^{-2} e_x1,\\ &&  e_{x^2}1 \cdot e_{x^2}1= e_{x^2}1, 
         \;e_{x^2}1 \cdot e_{x^2}x= e_{x^2}x,  
            \;e_{x^2}1 \cdot e_{x^2}x^2= e_{x^2}x^2, \\ && 
        e_{x^2}x \cdot e_{x^2}x=  p
^2 e_{x^2}x^2, \;e_{x^2}x \cdot e_{x^2}x^2= r  p
 e_{x^2}1,
       \;e_{x^2}x^2 \cdot e_{x^2}x^2=r  p
^{-1}  e_{x^2}x. 
      \end{eqnarray*}
      }}
      The comultiplication is defined by  
\begin{eqnarray*}
\Delta(1)&=&1\otimes 1,\\
\Delta(e_1x)&=& e_1x \otimes e_1x +r^{-1} p
^{-1}(e_{x^2}x \otimes e_xx + e_xx\otimes e_{x^2}x),\\
\Delta(e_1x^2)&=& e_1x^2 \otimes e_1x^2 +r p
 (e_{x^2}x^2 \otimes e_xx^2 + e_xx^2 \otimes e_{x^2}x^2),\\
\Delta(e_x1)&=& e_11 \otimes e_x1 +e_{x}1 \otimes e_11 + e_{x^2}1 \otimes e_{x^2}1,\\
\Delta(e_xx)&=& e_1x \otimes e_xx +e_{x}x \otimes e_1x + r^{-1} p
^{-2}e_{x^2}x \otimes e_{x^2}x,\\
\Delta(e_xx^2)&=& e_1x^2 \otimes e_xx^2 +e_{x}x^2 \otimes e_1x^2 + r p
^{-1}e_{x^2}x^2 \otimes e_{x^2}x^2,\\
\Delta(e_{x^2}1)&=& e_11 \otimes e_{x^2}1 +e_{x^2}1 \otimes e_11 +e_x1 \otimes e_x1,\\
\Delta(e_{x^2}x)&=& e_1x \otimes e_{x^2}x +e_{x^2}x \otimes e_1x + p
 e_xx \otimes e_xx,\\
\Delta(e_{x^2}x^2)&=& e_1x^2 \otimes e_{x^2}x^2 +e_{x^2}x^2 \otimes e_1x^2 + p
^2 e_xx^2 \otimes e_xx^2.
\end{eqnarray*}
\noindent
The counit is given by $\epsilon
(e_11)=\;\epsilon
(e_1x)= \epsilon
(e_1x^2)=\epsilon
(1)=1$ and the non-specified values are zero. 
    
\begin{eqnarray*}
    \Phi&=& p
^{-1}e_x1 \otimes e_x1 \otimes e_x1 +  q
^{-1}e_x1 \otimes e_x1 \otimes e_{x^2}1 + r  p
 e_x1 \otimes e_{x^2}1 \otimes e_x1\\ &+& r^{-1}  q
 e_x1 \otimes e_{x^2}1 \otimes e_{x^2}1 + r  p
  q
^{-1}e_{x^2}1 \otimes e_x1 \otimes e_x1 +\\
   &+&  r^{-1}  p
^{-1}e_{x^2}1 \otimes e_{x^2}1 \otimes e_{x^2}1 + e_11 \otimes e_11 \otimes e_11 + \\ &+&\sum_{u,v \in \{x,x^2\}} ( e_11 \otimes e_u1 \otimes e_v1 + e_u1 \otimes e_11 \otimes e_v1 + e_u1 \otimes e_v1 \otimes e_11 ) + \\
&+&\sum_{u \in \{x,x^2\}} ( e_11 \otimes e_11 \otimes e_u1 + e_11 \otimes e_u1 \otimes e_u1 + e_u1 \otimes e_11 \otimes e_11 )    
\end{eqnarray*}
\noindent
Now we construct  morphisms $f:=(a_{i,j})_{1 \leq i,j \leq 9}$ of $D^{\omega}G$ satisfying the conditions $f(1)=1, \;f^{\otimes 3}(\Phi)=\Phi, f \circ \mu=\mu \circ f^{\otimes 2}, f^{\otimes 2}\Delta=\Delta \circ f$ and $\epsilon
 \circ f=\epsilon
$. 

 We assume in the sequel that the ground field $\K=\mathbb{C}$. Direct computations leads to the following solutions:\\
\noindent
Let $\xi=e^{i \pi/3}$ (cubic root of $-1$), then the following map $f: D^{\omega}\Z_3 \rightarrow  D^{\omega}\Z_3$ is quasi-bialgebra morphisms (over complex numbers).

\begin{example}\label{example1}
\begin{eqnarray*}
   f(1)=1, \;f(e_1x)=e_1x, \;f(e_1x^2)=e_1x^2, \; f(e_x1)=e_x1, \; 
   f(e_xx)=-\xi e_xx, \\  \; f(e_xx^2)=\xi^2 e_xx^2,  f(e_{x^2}1)=e_{x^2}1,  \; f(e_{x^2}x)= \xi^2e_{x^2}x,  \; f(e_{x^2}x^2)=-\xi e_{x^2}x^2.
  \end{eqnarray*}
  \end{example}  
  \begin{example} 
\begin{eqnarray*}
  g(1)=1, \;g(e_1x)=e_1x, \;g(e_1x^2)=e_1x^2, \; g(e_x1)=e_x1, \; g(e_xx)=\xi^2 e_xx,\\  \; g(e_xx^2)=- \xi e_xx^2, 
  g(e_{x^2}1)=e_{x^2}1,  \; g(e_{x^2}x)= - \xi e_{x^2}x,  \; f(e_{x^2}x^2)=\xi ^2i e_{x^2}x^2.
  \end{eqnarray*} 
  \end{example} 
According to twisting principle of quasi-bialgebra (Theorem \ref{Thm:TwistHQbialgra}), we construct using Example \ref{example1}, the following HQ-bialgebra    $D^{\omega}_{\xi}\Z_3$ defined on the vector space $D^{\omega}\Z_3$ with respect to the previous basis as 

The multiplication is given by
{\small{
\begin{eqnarray*}
&&  \mu_{\xi}(e_1x,e_1x)=e_1x^2, \; \mu_{\xi}(e_1x,e_1x^2)=1-e_x1-e_{x^2}1, \; \mu_{\xi}(e_1x^2,e_1x^2)=e_1x, \\ && \mu_{\xi}(e_x1,e_x1)=e_x1, \;
 \mu_{\xi}(e_x1,e_xx)=- \xi e_xx, \;  \mu_{\xi}(e_x1,e_xx^2)= \xi^2 e_xx, \\ && \;  \mu_{\xi}(e_xx,e_xx)= \xi^2  p
 e_xx^2, \; \mu_{\xi}(e_xx,e_xx^2)= r^{-1} p
 ^{-1} e_x1, \\ &&
 \mu_{\xi}(e_xx^2,e_xx^2)= - \xi r^{-1} p
^{-1} e_xx,  \;  \mu_{\xi}(e_{x^2}1,e_{x^2}1)=e_{x^2}1,  \;  \mu_{\xi}(e_{x^2}1,e_{x^2}x)=\xi ^2e_{x^2}x, \\ &&   \mu_{\xi}(e_{x^2}1,e_{x^2}x^2)=- \xi e_{x^2}x^2,\;  
   \mu_{\xi}(e_{x^2}x,e_{x^2}x)=- \xi  p
 ^2 e_{x^2}x^2, \\ && \;   \mu_{\xi}(e_{x^2}x,e_{x^2}x^2)=r  p
 e_{x^2}x^2,  \;   \mu_{\xi}(e_{x^2}x^2,e_{x^2}x^2)=\xi^2 r   p
 ^{-1} e_{x^2}x. 
    \end{eqnarray*}  
    }}
    \noindent
The comultiplication is defined as 
{\small{
\begin{eqnarray*}
&& \Delta(1)=1\otimes 1,\\
&& \Delta(e_1x)= e_1x \otimes e_1x +r^{-1} p
^{-1}(e_{x^2}x \otimes e_xx + e_xx\otimes e_{x^2}x),\\
&& \Delta(e_1x^2)= e_1x^2 \otimes e_1x^2 +r p
 (e_{x^2}x^2 \otimes e_xx^2 + e_xx^2 \otimes e_{x^2}x^2),\\
&& \Delta(e_x1)= 1 \otimes e_x1 +e_{x}1 \otimes 1 -2 e_x1 \otimes e_x1-e_{x^2}1 \otimes e_{x}1 -e_{x}1 \otimes e_{x^2}1 +e_{x^2}1 \otimes e_{x^2}1,\\
&& \Delta(e_xx)= -\xi (e_1x \otimes e_xx +e_{x}x \otimes e_1x + r^{-1} p
^{-2}e_{x^2}x \otimes e_{x^2}x),\\
&& \Delta(e_xx^2)= \xi ^2(e_1x^2 \otimes e_xx^2 +e_{x}x^2 \otimes e_1x^2 + r p
^{-1}e_{x^2}x^2 \otimes e_{x^2}x^2),\\
&& \Delta(e_{x^2}1)= 1 \otimes e_{x^2}1 +e_{x^2}1 \otimes 1 -e_x1 \otimes e_{x^2}1 -e_{x^2}1 \otimes e_x1 -2 e_{x^2}1 \otimes e_{x^2}1  +e_x1 \otimes e_x1,\\
&& \Delta(e_{x^2}x)= \xi ^2 (e_1x \otimes e_{x^2}x +e_{x^2}x \otimes e_1x + p
 e_xx \otimes e_xx),\\
&& \Delta(e_{x^2}x^2)= - \xi (e_1x^2 \otimes e_{x^2}x^2 +e_{x^2}x^2 \otimes e_1x^2 + p
^2 e_xx^2 \otimes e_xx^2).
\end{eqnarray*}
}}
\noindent
The counit is the same and is given by $\epsilon
(e_11)=\;\epsilon
(e_1x)= \epsilon
(e_1x^2)=\epsilon
(1)=1$ and the non-specified values are zero.   The map $\Phi$ is also the same.  
The twist map is  $f$ which is given in Example \ref{example1}.

Nevertheless, the new multiplication is no longer associative as can be seen from the following calculation: $(e_x1 \cdot  e_xx)\cdot e_xx= p
 e_xx^2$ and $e_x1 \cdot (e_xx \cdot e_xx)=- \xi  p
 e_xx^2$.
    

\bibliographystyle{amsplain}
\providecommand{\bysame}{\leavevmode\hbox to3em{\hrulefill}\thinspace}
\providecommand{\MR}{\relax\ifhmode\unskip\space\fi MR }
\providecommand{\MRhref}[2]{%
  \href{http://www.ams.org/mathscinet-getitem?mr=#1}{#2}
}
\providecommand{\href}[2]{#2}

\end{document}